\newcommand{\K}{\mathbb{K}}
\newcommand{\N}{\mathbb{N}}
\newcommand{\R}{\mathbb{R}}
\renewcommand{\leq}{\leqslant}
\newcommand{\Sym}{S}
\newtheorem{theorem}{Theorem}
\newtheorem{remark}{Remark}
\newtheorem{corollary}{Corollary}
\newtheorem{proposition}{Proposition}
\newtheorem{definition}{Definition}
\newtheorem{example}{Example}
\DeclareMathOperator{\sh}{sh}
\DeclareMathOperator{\Supp}{Supp}
\DeclareMathOperator{\Mon}{Mon}
\DeclareMathOperator{\Stab}{Stab}
\DeclareMathOperator{\len}{len}
\DeclareMathOperator{\spe}{sp}
\DeclareMathOperator{\wt}{wt}
\DeclareMathOperator{\Tr}{Tr}
\DeclareFontFamily{U}{mathx}{\hyphenchar\font45}
\DeclareFontShape{U}{mathx}{m}{n}{
      <5> <6> <7> <8> <9> <10>
      <10.95> <12> <14.4> <17.28> <20.74> <24.88>
      mathx10
      }{}
\DeclareSymbolFont{mathx}{U}{mathx}{m}{n}
\DeclareMathAccent{\widecheck}{0}{mathx}{"71}
\renewcommand{\leq}{\leqslant}
\renewcommand{\geq}{\geqslant}
\begin{document}
\title[Solutions to symmetric systems of equations]
{
Symmetric ideals, Specht polynomials and solutions to symmetric systems of equations
}
\author{Philippe Moustrou}
\address{Department of Mathematics and Statistics, UiT - the Arctic University of Norway, 9037 Troms\o, Norway}
\email{philippe.moustrou@uit.no}

\author{Cordian Riener}
\address{Department of Mathematics and Statistics, UiT - the Arctic University of Norway, 9037 Troms\o, Norway}
\email{cordian.riener@uit.no}

\author{Hugues Verdure}
\address{Department of Mathematics and Statistics, UiT - the Arctic University of Norway, 9037 Troms\o, Norway}
\email{hugues.verdure@uit.no}

\keywords{Symmetric group, Specht polynomials, polynomial equations}

\begin{abstract}
An ideal of polynomials is symmetric if it is closed under permutations of variables. We relate general  symmetric ideals to the so called Specht ideals generated by all Specht polynomials of a given shape. We show a connection between  the leading monomials of polynomials in the ideal and the Specht polynomials contained in the ideal. This provides applications in several contexts. Most notably,  this connection gives information about  the solutions of the corresponding set of equations. From another perspective, it restricts the isotypic decomposition of the ideal viewed as a representation of the symmetric group.

\end{abstract}

\maketitle

\section{Introduction}
Let $\Sym_n$ denote the symmetric group on $n$-elements, and let $\K$ be a field. Then $\Sym_n$ acts naturally on an $n$-dimensional space $\K^n$ by permuting coordinates. This linear action then gives rise to an action on the corresponding polynomial ring by permuting coordinates. In this article we consider  ideals $I\subset\K[X_1,\ldots X_n]$ which are stable under this action. The study of such ideals appears quite naturally in different contexts (see for example \cite{steidel2013grobner,faugere,krone2016equivariant,buse}).

Our interest in such ideals stems from algorithmic purposes: the symmetry on a set of equations often can be used to simplify its resolution. In this flavour, a fundamental result by Timofte (see \cite{timofte2003positivity,riener2012degree}) yields that  every symmetric variety defined by polynomials of  degree $d$ is non-empty over $\R$ if and only if  it contains a real point with at most $d$ distinct coordinates. Here we aim at generalising this aspect  of Timofte's result in various ways. We are able to show that - under the assumption that  the number of variables is sufficiently large - the variety corresponding to the symmetric ideal  will not contain any point with strictly more than $d$ distinct coordinates. Moreover, our result yields that the set of possible configurations of these $d$ distinct coordinates can be further restricted by the shape of the monomials of highest degree amongst the generators of  $I$, see Section~\ref{ssec:DegreePrinciple}.  The arguments put forward to establish this result are purely algebraic and work with no requirements on $\K$. In fact, this result will follow from a study of Specht polynomials contained in symmetric ideals. More precisely, we assign a partition of $n$ to these monomials and we relate these partitions  to specific Specht polynomials which belong to the ideal, see Section~\ref{sec:Main}. 

In characteristic $0$, this property also has an application to the structure of the decomposition of $I$ in terms of $\Sym_n$-representations. The action of $\Sym_n$ on $I$ turns $I$ and ${\K[X_1,\ldots,X_n]}/{I}$ into $\K[\Sym_n]$ modules. Over a field of characteristic zero this modules can be decomposed into irreducible $\K[\Sym_n]$ modules, which are usually called Specht modules and we show in Section~\ref{ssec:isotypic} that the possible Specht modules appearing in this decomposition are also very restricted (see \cite{basu2015isotypic} for a result in a  similar spirit in the real setting). One application of such a decomposition concerns sums of squares representations of positive symmetric polynomials modulo symmetric ideals. The understanding of the irreducible representations in $I$ gives a control on the complexity of sums of squares decomposition in this setup, see Section~\ref{ssec:SOS}. 

This article is structured as follows. Section~\ref{sec:prelim} collects necessary standard notations and definitions. Then, Section~\ref{sec:Specht} focuses on varieties defined by Specht polynomials and their properties. This is used in Section~\ref{sec:Main} to describe the Specht polynomials contained in symmetric ideals. Finally, Section~\ref{sec:appl} is devoted to applications.

\section{Preliminary notations and definitions}\label{sec:prelim}

\subsection*{Partitions and Young tableaux}

For any natural number $n$, one can consider its \emph{partitions}:

\begin{definition} Let $n\in\N$. A partition $\lambda$ of $n$, (denoted $\lambda \vdash n$) is a sequence $\lambda=(\lambda_1,\cdots,\lambda_l)$ of positive natural numbers ordered such that $\lambda_1\geqslant\lambda_2 \geqslant \cdots \geqslant \lambda_l \geqslant 0$ with the property  $\lambda_1+\ldots+\lambda_l = n.$ The length of $\lambda$ is 
\[\len(\lambda)= \max \{i:\ \lambda_i \neq 0\}.\]
\end{definition}

We will allow ourselves to identify partitions that only differ by $0$ terms.

\begin{definition}
For a given partition $\lambda$, its dual partition $\lambda^\perp$  is defined by \[\left(\lambda^\perp\right)_ i = \left|\{j,\ \lambda_j \geqslant i\}\right|.\]
\end{definition}

Partitions are very well known and are closely related to Young tableaux (see \textit{e.g.} \cite{S01}).

\begin{definition} Given $\lambda \vdash n$, a \emph{Young tableau} $T$ of shape  $\lambda\vdash n$, or a \emph{$\lambda$-tableau} consists of $\len(\lambda)$ rows, with
$\lambda_i$
entries in the $i$-th row.
Each entry is an element in $\{1, \ldots, n\}$, and each of these
numbers occurs exactly once.
Furthermore we write $\sh(T)= \lambda$. 
\end{definition}

\begin{definition}
Let $n\in \N$. Let  $\lambda=(\lambda_1,\cdots,\lambda_l)\vdash n$ and $\mu=(\mu_1,\cdots,\mu_m)\vdash n$ be two partitions. We say that $\lambda$ dominates $\mu$ if 
\[ \sum_{j=1}^i \lambda_j \geqslant \sum_{j=1}^i \mu_j\ \text{ holds for all }1 \leqslant i \leqslant \min\{\len(\lambda),\len(\mu)\}.\]
We will write $\lambda \trianglerighteq\mu$ in this case.
\end{definition}

Equipped with the dominance order, the set of all partitions of a given $n\in \N$  is a partially ordered set. We will further use $\lambda \not\trianglerighteq\nu$ to denote the case in which  $\lambda$ does not dominate $\nu$. Note that the order is only partial and hence this does not entail that $\nu$ dominates $\lambda$, since they also might be not comparable. Furthermore, note that $\left(\lambda^\perp\right)_1 = \len(\lambda)$, $\left(\lambda^\perp\right)^\perp = \lambda$, and  \[\lambda \trianglerighteq \mu \Leftrightarrow \mu^\perp \trianglerighteq  \lambda^\perp.\]

\subsection*{Orbit types and partitions}

The action of $\Sym_n$ on $\K^n$ naturally decomposes the space into orbits. 
\begin{definition}
For every $x\in \K^n$, the associated stabilizer subgroup $\Stab(x)\subseteq \Sym_n$ is of the form
$$\Stab(x) \simeq S_{\ell_1}\times S_{\ell_2}\times \cdots \times S_{\ell_k}$$
with $\ell_1\geq \ell_2\geq \ldots\geq \ell_k$. We hence define the \emph{orbit type} of $x$ to be  
\[
\Lambda(x):=(\ell_1, \ell_2,\cdots, \ell_k).
\]
Then, for a given $\lambda\vdash n$ we can define $$H_\lambda:=\left\{x\in \K^n\,:\, \Lambda(x)=\lambda\right\}.$$
\end{definition}
\begin{remark}
Note that we have $$\K^n=\bigcupdot_{\lambda\vdash n} H_\lambda.$$
\end{remark}

\subsection*{Polynomials, varieties, and symmetric ideals}

Let $\K$ be a field, and consider the polynomial ring in $n$ variables $\K[X_1,\cdots,X_n]$. 
For any $P\in \K[X_1,\cdots,X_n]$, we denote by $\Mon(P)$ the set of monomials appearing in $P$, and by $P_h$ its homogenous component of degree $h$. 

Given a monomial
\[
m=\prod_{j=1}^n X_j ^{k_j},
\]
its \emph{support} is 
\[
\Supp(m)=\{j,\quad k_j \neq 0 \}
\]
and its \emph{weight} $\wt(m)$ is the cardinality of its support. By taking the union over all the monomials in $\Mon(P)$, we generalize these notions to $P$ to define $\Supp(P)$ and $\wt(P)$.

To every ideal in $\K[X_1,\ldots,X_n]$ one can associate a variety:
\begin{definition}
Let $I$ be an ideal in $\K[X_1,\ldots,X_n]$. The \emph{variety} $V(I)$ associated with $I$ is the subset of $\K^n$ made by the common zeros of all the polynomials in $I$, namely
\[
V(I)= \{ x\in \K^n, \quad P(x)=0 \text{ for every } P\in I\}.
\]
\end{definition}

The action of $\Sym_n$ on $\K^n$ induces an action on $\K[X_1, \ldots, X_n]$ by permuting the variables. We denote by $\sigma P$ the image of a polynomial $P$ under the action of a permutation $\sigma$.

\begin{definition}
An ideal $I\subset\K[X_1, \ldots, X_n]$ is called a \emph{symmetric ideal} if for every $P$ in $I$ and for every $\sigma \in \Sym_n$, $\sigma P$ belongs to $I$.
\end{definition}{}

Note that if $I$ is a symmetric ideal, then the variety $V(I)$ is closed under the action of $\Sym_n$ on the coordinates.

\subsection*{Specht Polynomials}

The so-called \emph{Specht polynomials} will play a central role in our proofs. Those polynomials were originally designed by Specht \cite{specht} to construct the different irreducible representations of $\Sym_n$. 

\begin{definition}Let $n\in \N$.
\begin{enumerate}
\item For a set $S=\{i_1, \ldots, i_r \}\subset \{1, \ldots, n\}$, we define the \emph{Vandermonde determinant} $\Delta(S)$ of the variables $X_i$, for $i\in S$:
$$ \Delta(S):=\prod_{1\leq j< k \leq r }(X_{i_j}-X_{i_k}).$$
\item Let $\lambda\vdash n$ and $T$ be a $\lambda$-tableau. Then the \emph{Specht polynomial} associated with $T$ is the polynomial
$$\spe_{T}:=\prod_{c}\Delta(T_{\cdot,c}),$$
where $c$ runs through the columns of $T$, and $T_{\cdot,c}$ denotes the entries in the $c$th column. We will say that $\spe_T$ is a Specht polynomial of shape $\lambda$.
\end{enumerate}
\end{definition}

\begin{example}
The Specht polynomial associated with the Young tableau
\[
\begin{Young}
$4$ & $2$ & $6$ & $1$ \cr
$8$ & $7$ & $5$  \cr
$3$  \cr
\end{Young}
\]
is 
\[
(X_4-X_8)(X_4-X_3)(X_8-X_3)(X_2-X_7)(X_6-X_5).
\]
\end{example}

\section{Zeros of Specht polynomials}\label{sec:Specht}
Throughout the paper, we will be interested in the ideal generated by all the Specht polynomials of a given shape, as well as in the corresponding variety. 
\begin{definition}Let $\K$ be a field, $n$ be an integer, and $\mu\vdash n$.
\begin{itemize}
    \item The \emph{$\mu$-Specht ideal}, denoted by $I^{\spe}_\mu$, is the ideal of $\K[X_1,\ldots,X_n]$ generated by all the Specht polynomials of shape $\mu$.
    \item We denote by $V_\mu$ the set of common zeros of all Specht polynomials of shape $\mu$, that is 
$$V_\mu:=V(I^{\spe}_\mu)=\bigcap_{\sh(T)=\mu} V(\spe_T).$$
\end{itemize}{}
 
\end{definition}

We aim at describing more precisely those varieties. Particular cases of these varieties  have been studied for example in \cite{yanagawa2019specht,watanabe2017vandermonde,froberg2016vandermonde}. Let us start with a few remarks. Let $T$ be a Young tableau, of shape $\mu \vdash n$. A point $x=(x_1,\ldots,x_n)$ is a zero of $\spe_T$ if and only if there exists a column of $T$ containing two indexes $i\neq j$ such that $x_i=x_j$. Following this easy observation, we get a characterization of $V_\mu$ that will be useful later: a point $x\in \K^n$ \emph{does not } belong to $V_\mu$ if and only if one can fill in a Young tableau of shape $\mu$ with the coordinates of $x$ in such a way that in every column, all the values are distinct.

This remark already implies some properties about varieties associated with Specht ideals, that will be useful in the following:

\begin{proposition}\label{prop:SpechtVar}
Let $x$ be a point in $\K^n$, and $\Lambda(x)$ its orbit type. Then:
\begin{enumerate}
\item[i)] The point $x$ is not in the variety $V_{\Lambda(x)}$,
\item[ii)] If $\mu$ is a partition of $n$ such that $x \notin V_\mu$, then $\Lambda(x) \trianglelefteq\mu$. 
\end{enumerate}
\end{proposition}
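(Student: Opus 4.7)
The plan is to exploit the characterization recorded just before the statement: $x$ fails to lie in $V_\mu$ iff some $\mu$-tableau $T$ can be filled with the indices $\{1,\ldots,n\}$ so that, in every column, the corresponding entries of $x$ are pairwise distinct. Colouring each cell by the $x$-value placed in it, this amounts to a filling of shape $\mu$ whose columns are ``rainbow'' and whose global colour multiplicities are exactly $\Lambda(x)=(\ell_1,\ldots,\ell_k)$.

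For part (i) I would construct such a tableau for shape $\Lambda(x)$ explicitly: partition $\{1,\ldots,n\}$ into $k$ blocks according to the equality class of the corresponding coordinate of $x$, of sizes $\ell_1\geq\ldots\geq\ell_k$, and place the $i$-th block into the $i$-th row of a $\Lambda(x)$-tableau $T$. Since the row lengths match the block sizes, every column receives at most one entry per row, hence pairwise distinct colours; the associated Specht polynomial $\spe_T$ therefore does not vanish at $x$, proving $x\notin V_{\Lambda(x)}$.

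For part (ii), assume $x\notin V_\mu$ and fix a tableau $T$ of shape $\mu$ whose columns carry distinct $x$-values. I would prove $\Lambda(x)\trianglelefteq\mu$ via its dual form $\mu^\perp\trianglelefteq\Lambda(x)^\perp$, using the equivalence $\lambda\trianglerighteq\nu\Leftrightarrow\nu^\perp\trianglerighteq\lambda^\perp$ recorded after the dominance definition. For any $i\geq 1$, the number of cells in the first $i$ columns of $T$ is $\sum_{r=1}^i \mu^\perp_r$; because each column is rainbow, colour $j$ occupies at most $\min(\ell_j,i)$ of those cells. Summing in $j$ and invoking the identity $\sum_j\min(\ell_j,i)=\sum_{r=1}^i \Lambda(x)^\perp_r$ (obtained by double-counting pairs $(j,r)$ with $\ell_j\geq r$) yields the required inequality for every $i$. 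The main obstacle here is not combinatorial but notational: one has to keep track of both the partition/dual-partition duality and the row-versus-column reading of $T$; once the colouring viewpoint is in place, the rest is elementary bookkeeping.
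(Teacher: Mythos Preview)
Your argument is correct. Part i) is essentially identical to the paper's: place each equality class of coordinates into its own row of a $\Lambda(x)$-tableau and observe that the resulting Specht polynomial does not vanish at $x$.

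For part ii) the paper takes a slightly different route. Instead of counting cells in the first $i$ columns and passing through the duality $\Lambda(x)\trianglelefteq\mu\Leftrightarrow\mu^\perp\trianglelefteq\Lambda(x)^\perp$, it works directly with row sums: given a $\mu$-tableau $U$ with rainbow columns, it permutes the entries within each column so that smaller-indexed colours sit higher. Then for every $k$, all cells coloured $u_1,\ldots,u_k$ lie in the first $k$ rows of $U$, which immediately yields $\lambda_1+\cdots+\lambda_k\leq\mu_1+\cdots+\mu_k$. Both arguments are elementary and of the same strength; the paper's avoids invoking the dual-partition equivalence and the identity $\sum_j\min(\ell_j,i)=\sum_{r\leq i}\Lambda(x)^\perp_r$, while yours avoids the within-column sorting step and the ``first $k$ colours fit in the first $k$ rows'' observation.
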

\begin{proof}
Let $\lambda=\Lambda(x)=(\lambda_1,\ldots,\lambda_r)$, and let $u_1,\ldots, u_r$ be the distinct coordinates of $x$, where for each $1\leq i \leq r$, $u_i$ appears $\lambda_i$ times.

Let us first prove i). Let $T$ be any Young tableau of shape $\lambda$ such that the indexes in the $i$th row are the indexes $j$ such that $x_j=u_i$. Then $\spe_T(x)\neq 0$, and hence $x$ is outside $V_\lambda$.

Now let us prove ii). Suppose $x$ not in $V_\mu$. This means that there exists a Young tableau $U$ of shape $\mu$ such that $\spe_U(x)\neq 0$. Thus we can fill $U$ with $u_1,\ldots,u_r$ in such a way that for every $1\leq i \leq r$, $u_i$ appears at most once per column, and we may assume without loss of generality that in every column, if $u_j$ is below $u_i$, then $i<j$. As a consequence, for every $1\leq k \leq r$, the $u_i$ for $i\leq k$ are contained in the first $k$ rows of $T$. This means that
\[
\lambda_1 + \ldots + \lambda_k \leq \mu_1 + \ldots + \mu_k,
\]
which implies $\lambda \trianglelefteq \mu$.

\end{proof}

Now we want to prove that $V_\lambda$ contains exactly the $V_\mu$ such that $\mu \trianglerighteq \lambda$. This is a consequence of the corresponding inclusion of ideals:

\begin{theorem}\label{thm:inclusionV_lamda}
Let $\lambda$ and $\mu$ be two partitions of $n$. Then the following assertions are equivalent:
\begin{enumerate}
\item[i)] The partition $\mu$ dominates $\lambda$, \emph{i.e.} $\lambda \trianglelefteq \mu$,
\item[ii)] The ideal $I^{\spe}_\mu$ contains $I^{\spe}_\lambda$, \emph{i.e.} $I^{\spe}_\lambda \subset I^{\spe}_\mu$,
\item[iii)] The variety $V_\lambda$ contains $V_\mu $, \emph{i.e.} $V_\mu \subset V_\lambda$.
\end{enumerate}  
\end{theorem}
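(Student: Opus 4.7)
My plan is to close the cycle (ii)$\Rightarrow$(iii)$\Rightarrow$(i)$\Rightarrow$(ii); only the last is substantive. The step (ii)$\Rightarrow$(iii) is immediate, since taking the vanishing locus reverses ideal inclusion: $I^{\spe}_{\lambda} \subset I^{\spe}_{\mu}$ yields $V_\mu = V(I^{\spe}_\mu) \subset V(I^{\spe}_\lambda) = V_\lambda$.

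For (iii)$\Rightarrow$(i) I would argue by contrapositive. Suppose $\lambda \not\trianglelefteq \mu$. Granting that $\K$ is large enough to realize an orbit of type $\lambda$ (an assumption apparently implicit throughout the paper), pick any $x \in \K^n$ with $\Lambda(x) = \lambda$. Proposition~\ref{prop:SpechtVar}~i) gives $x \notin V_\lambda$, while the contrapositive of Proposition~\ref{prop:SpechtVar}~ii), applied to $\Lambda(x) = \lambda \not\trianglelefteq \mu$, forces $x \in V_\mu$. Hence $V_\mu \not\subset V_\lambda$, contradicting (iii).

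The heart of the theorem is (i)$\Rightarrow$(ii), i.e. $I^{\spe}_\lambda \subset I^{\spe}_\mu$ whenever $\lambda \trianglelefteq \mu$. My plan is to reduce to the case where $\mu$ \emph{covers} $\lambda$ in the dominance order and then chain along a saturated chain from $\lambda$ to $\mu$ for the general case. For a cover, $\mu$ is obtained from $\lambda$ by a single box-lifting move, and given a $\lambda$-tableau $T$, the task is to produce an explicit identity of the form
\[
\spe_T = \sum_\sigma f_\sigma \cdot \sigma(\spe_U)
\]
for a fixed $\mu$-tableau $U$, with $f_\sigma \in \K[X_1,\ldots,X_n]$ and $\sigma$ ranging over an appropriate subset of $\Sym_n$. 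The basic computational tool is the Vandermonde factorization
\[
\Delta(S) = \pm\,\Delta(S\setminus\{i\})\cdot\prod_{j\in S,\,j\neq i}(X_j - X_i),
\]
which lets one peel a single variable off a column of $T$ that is too long relative to $\mu^\perp$ and reabsorb it into another column to produce a $\mu$-shaped arrangement.

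The main obstacle is orchestrating this peel-and-reabsorb operation cleanly. Organizing it so that the output genuinely lies in $I^{\spe}_\mu$ (and not in the Specht ideal of some intermediate shape) naturally calls for a Garnir-type alternating sum over a column-stabilizer coset of the $\mu$-tableau attached to $T$, and it is precisely the hypothesis $\lambda \trianglelefteq \mu$ that guarantees that the cancellation closes up. Everything else in the proof is formal once the cover case is in hand.
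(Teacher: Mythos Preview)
Your outline matches the paper's proof closely. For (ii)$\Rightarrow$(iii) and (iii)$\Rightarrow$(i) you do exactly what the paper does (the paper argues (iii)$\Rightarrow$(i) directly rather than by contrapositive, but it is the same use of Proposition~\ref{prop:SpechtVar}, and it shares your implicit assumption that $\K$ is large enough to realise orbit type $\lambda$). For (i)$\Rightarrow$(ii) the paper likewise reduces to a single box move $\mu=(\lambda_1,\ldots,\lambda_i+1,\ldots,\lambda_j-1,\ldots)$ and then, after localising to the two affected columns of heights $a$ and $b$ with $a-b\geqslant2$, reduces to showing that $P=\Delta(\{1,\ldots,a\})\Delta(\{a+1,\ldots,a+b\})$ lies in the ideal generated by all $\Delta(S_1)\Delta(S_2)$ with $|S_1|=a-1$, $|S_2|=b+1$.

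Where your proposal stays vague is precisely the place that carries the content, and the idea you gesture at does not quite work as stated. The degrees do not match: the $\lambda$-product $P$ has degree $\binom{a}{2}+\binom{b}{2}$, while each $\mu$-product has degree $\binom{a-1}{2}+\binom{b+1}{2}$, a deficit of $a-b-1\geqslant1$. A bare Garnir-type alternation of $\mu$-Specht polynomials therefore cannot hit $P$ on the nose; your ``peel-and-reabsorb'' via $\Delta(S)=\pm\Delta(S\setminus\{i\})\prod_{j\neq i}(X_j-X_i)$ moves the degree the wrong way. The paper's missing ingredient is a monomial padding: set
\[
Q=\Delta(\{2,\ldots,a\})\,\Delta(\{1,a+1,\ldots,a+b\})\,X_1^{\,a-b-1}\in I^{\spe}_\mu,
\qquad
R=\sum_{i=1}^{a}\epsilon((1,i))\,(1,i)Q.
\]
One checks that $R$ is alternating in $\{1,\ldots,a\}$ and (trivially) in $\{a+1,\ldots,a+b\}$, hence divisible by $P$; the degree count above and a comparison of the coefficient of $X_1^{a-1}$ then give $R=P$ exactly, with no characteristic hypothesis. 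This explicit monomial factor $X_1^{a-b-1}$ is the idea your sketch is lacking; once it is in place, the rest of your plan goes through.
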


\begin{remark}\label{rem:specht}
Note that the inclusion of ideals was not a priori equivalent to the inclusion of varieties, since it is not known whether Specht ideals are radical, see \cite{yanagawa2019specht}.  
\end{remark}{}

\begin{proof}
We start by proving i) implies ii). Let $\lambda=(\lambda_1, \ldots, \lambda_t)$, and $\mu$ such that $\lambda \trianglelefteq \mu$. 
We only need to consider the particular case where $\mu$ is of the form
\[
\mu=(\lambda_1, \ldots, \lambda_{i-1}, \lambda_{i}+1, \lambda_{i+1}, \ldots, \lambda_{j-1}, \lambda_{j}-1, \lambda_{j+1}, \ldots, \lambda_t),
\]
where $\lambda_{i-1} > \lambda_i$ and $\lambda_{j} > \lambda_{j+1}$, in order to ensure that $\mu$ is a partition. Indeed, it is known that we can go from $\lambda$ to any $\mu \trianglerighteq \lambda$ by a finite number of such steps, see e.g.~\cite[Prop. 2.3]{B73}. 

Let $T$ be a Young tableau of shape $\lambda$. We need to show that $\spe_T$ belongs to the ideal generated by all the polynomials $\spe_U$ where $U$ runs through all the Young tableaux of shape $\mu$. If $U$ is a Young tableau of shape $\mu$, then its columns have the same number of elements than $T$, except for two of them. Let $U_1$ and $U_2$ be these columns, and let $T_1$ and $T_2$ be the corresponding columns in $T$. If $a=|T_1|$ and $b=|T_2|$, then $|U_1|=a-1$, $|U_2|=b+1$ and $a-b\geq 2$.  By restricting our attention to these two columns, it is enough to prove, up to permutation, that the polynomial 
\[
P=\Delta(\{1,\ldots,a\})\Delta(\{a+1,\ldots,a+b\})
\]
is in the ideal $I$ of $\K[X_1,\ldots,X_{a+b}]$ generated by all the polynomials of the form
\[
\Delta(S_1)\Delta(S_2), \text{ with } |S_1|=a-1, |S_2|=b+1, \text{ and } S_1 \cup S_2 = \{1,\ldots,a+b\}.
\]
Consider the polynomial
\[
Q = \Delta(\{2,\ldots,a\})\Delta(\{1,a+1,\ldots,a+b\}) X_1^{a-b-1},
\]
and the polynomial 
\[
R=\sum_{i=1}^a \epsilon((1,i)) (1,i) Q,
\]
where $\epsilon$ denotes the signature. 
By construction, $R$ is in the ideal $I$. We need to prove that for any pair $1\leqslant \alpha < \beta \leqslant a$ and $a+1 \leqslant \alpha <\beta \leqslant a+b$, the polynomial $(X_\alpha - X_\beta)$ divides $R$, equivalently $R$ vanishes whenever $X_\beta = X_\alpha$. 
In other words, we want to show 

\[R_{\alpha,\beta}=R(X_1\ldots,X_{\beta-1},X_\alpha,X_{\beta+1},\ldots,X_n)=0\]
for every pair $1\leqslant \alpha < \beta \leqslant a$ and $a+1 \leqslant \alpha <\beta \leqslant a+b$ .

The latter case is obvious by definition of $Q$ and $R$. Suppose $1\leqslant \alpha < \beta \leqslant a$. Then for every $i \neq \alpha,\beta$ we have $((1,i)Q)_{\alpha,\beta}=0$, so that \[R_{\alpha,\beta} = \epsilon((1,\alpha))((1,\alpha)Q)_{\alpha,\beta} + \epsilon((1,\beta))((1,\beta)Q)_{\alpha,\beta}.\]
Now, we have to distinguish two cases, namely $\alpha=1$ and $\alpha \neq 1$. In the first case, we have \[R_{1,\beta} = Q_{1,\beta} -((1,\beta)Q)_{1,\beta} = 0\] while in the second case, the signatures are both negative, but the polynomial differ from each other by interchanging the variables $X_1$ and $X_\alpha$ in places $\alpha$ and $\beta$, and by definition of $Q$, these polynomials are opposite of each other, that is, \[R_{\alpha,\beta}=0.\]

This implies that $P=\Delta(\{1,\ldots,a\})\Delta(\{a+1,\ldots,a+b\})$ divides $R$. Since the degree of $R$ satisfies
\begin{align*}
\deg(R)\leq \deg(Q) &= \frac{(a-1)(a-2)}{2} + \frac{(b+1)b}{2} + a-b-1 \\ &= \frac{a(a-1)}{2} + \frac{b(b-1)}{2}  \\& = \deg(P),
\end{align*}
it implies that
\[
R=cP
\]
with $c\in \K$, and we need to check that $c$ is not zero. The degree of $Q$ in the variable $X_{1}$ is $b+ a-b-1=a-1$ while its degree in the variable $X_i$ for $2\leqslant i \leqslant a $ is $a-2$. Thus, the degree of $R$ in the variable $X_1$ is exactly $a-1$ and the corresponding coefficient is $\Delta(\{2,\ldots,a\})\Delta(\{a+1,\ldots,a+b\})$, which is also the coefficient of $X_1^{a-b}$ in $P$. Hence $R = P$. 

Since ii) obviously implies iii), we only have to prove that iii) implies i). 
Assume then that $V_\mu \subset V_\lambda$. Consider $x$ with orbit type $\Lambda(x) = \lambda$. Then, according to i) of Proposition \ref{prop:SpechtVar}, $x\notin V_\lambda$. Then by assumption, $x\notin V_\mu$, and ii) of Proposition \ref{prop:SpechtVar} yields $\lambda \trianglelefteq \mu$.

\end{proof}

Finally, as a consequence of the previous results, we get a characterization of $V_\mu$ in terms of orbit types:

\begin{corollary}\label{thm:CaracSpechtHlambda}
The set of common zeros of Specht polynomials associated with Young tableaux of given shape $\mu$ can be characterised as    \[V_\mu= \left(\bigcup_{\lambda\trianglelefteq \mu} H_{\lambda}\right)^{c}  =\bigcup_{\nu\not\trianglelefteq \mu} H_{\nu}\]
\end{corollary}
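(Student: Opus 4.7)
The plan is to derive the corollary as a quick consequence of the two preceding results. The second equality, $\left(\bigcup_{\lambda\trianglelefteq \mu} H_\lambda\right)^c = \bigcup_{\nu\not\trianglelefteq \mu} H_\nu$, is immediate from the partition $\K^n = \bigcupdot_{\lambda\vdash n} H_\lambda$ noted in the earlier remark: complementation simply swaps "$\trianglelefteq\mu$" for "$\not\trianglelefteq\mu$" among the disjoint pieces $H_\lambda$. So the real content is the first equality, $V_\mu = \bigcup_{\nu\not\trianglelefteq \mu} H_\nu$, which I would prove by showing the two inclusions separately.

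For the inclusion $\bigcup_{\nu\not\trianglelefteq \mu} H_\nu \subset V_\mu$, I would simply apply the contrapositive of Proposition \ref{prop:SpechtVar}(ii). Indeed, if $x \in H_\nu$ with $\nu \not\trianglelefteq \mu$, then $\Lambda(x) = \nu \not\trianglelefteq \mu$, and Proposition \ref{prop:SpechtVar}(ii) (read contrapositively) forces $x \in V_\mu$.

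For the reverse inclusion $V_\mu \subset \bigcup_{\nu\not\trianglelefteq \mu} H_\nu$, I would take $x \in V_\mu$ and set $\lambda = \Lambda(x)$, so that $x \in H_\lambda$. It suffices to show $\lambda \not\trianglelefteq \mu$. Assume for contradiction that $\lambda \trianglelefteq \mu$; then Theorem \ref{thm:inclusionV_lamda} gives $V_\mu \subset V_\lambda$, hence $x \in V_\lambda = V_{\Lambda(x)}$. This contradicts Proposition \ref{prop:SpechtVar}(i), which asserts $x \notin V_{\Lambda(x)}$. Therefore $\lambda \not\trianglelefteq \mu$ and $x$ belongs to the union on the right.

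There is no real obstacle: all the work has been done in Proposition \ref{prop:SpechtVar} and Theorem \ref{thm:inclusionV_lamda}, and the corollary is essentially a bookkeeping combination of parts (i) and (ii) of the proposition together with the inclusion of varieties from the theorem. The only point worth being careful about is using Theorem \ref{thm:inclusionV_lamda} in the correct direction — from $\lambda \trianglelefteq \mu$ to $V_\mu \subset V_\lambda$ — which is precisely the implication i) $\Rightarrow$ iii) already established there.
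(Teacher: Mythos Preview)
Your proof is correct and essentially identical to the paper's: both combine Proposition~\ref{prop:SpechtVar}(i), (ii) and the implication i)~$\Rightarrow$~iii) of Theorem~\ref{thm:inclusionV_lamda} in exactly the same way. The only cosmetic difference is that the paper argues on the complement, proving $V_\mu^c = \bigcup_{\lambda\trianglelefteq\mu} H_\lambda$, while you prove $V_\mu = \bigcup_{\nu\not\trianglelefteq\mu} H_\nu$ directly; each of your two inclusions is precisely the contrapositive of one of theirs.
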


\begin{proof}
We want to show $V_\mu^c= \bigcup_{\lambda\trianglelefteq \mu}H_{\lambda}$.
The direct inclusion is nothing but ii) in Proposition~\ref{prop:SpechtVar}.
Conversely, let $x\in H_\lambda$, with $\lambda \trianglelefteq \mu$.
According to i) in Proposition \ref{prop:SpechtVar}, $x$ is outside $V_\lambda$. 
Since $\lambda \trianglelefteq \mu$, it follows from Theorem~\ref{thm:inclusionV_lamda} that $V_\mu \subset V_\lambda$, so that $x\in V_\mu^c$.

\end{proof}

\section{Specht polynomials in symmetric ideals}\label{sec:Main}
In this section we show that if a symmetric ideal contains polynomials with sparse leading component, then this ideal will contain many Specht polynomials. Let us be more precise: First, to every monomial we associate a partition:

\begin{definition}\label{def:Mu(m)}
Let $m$ be a monomial of weight $l$ and degree $d$ in $\K[X_1, \ldots, X_n]$. The partial degrees of $m$ induce a partition of $d$ of length $l$, say $(\lambda_1, \ldots, \lambda_l)$. 

If moreover we assume that $l + d \leq n$, we can define a partition $\mu(m)$ of $n$ by
\[
\mu(m)=(\lambda_1 +1, \lambda_2 + 1, \ldots, \lambda_l + 1, \underbrace{1, \ldots, 1}_{n-d-l}).
\]
\end{definition}

\begin{example}\label{exa1}
Let $n=12$, and 
\[
m=X_2X_4^4X_5^2.
\] 
Then 
\[
\mu(m)=(5,3,2,1,1).
\]
\end{example}

Then we will show:

\begin{theorem}\label{thm:SpechtTous}
Let $I \subset \K[X_1,\ldots,X_n]$ be a symmetric ideal. Assume that there exists $P \in I$ of degree $d$, such that $d + \wt(P_d)\leqslant n$. Then, for every  monomial $m \in \Mon(P_d)$, the ideal $I$ contains every $\spe_T$ 
for which  $\sh(T) \trianglelefteq \mu(m)^\perp$.
In other words: 
\[
I^{\spe}_\lambda \subset I
\]
for every $\lambda \trianglelefteq \mu(m)^\perp$. 
\end{theorem}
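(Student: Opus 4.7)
The strategy is to reduce, via antisymmetrization, to producing a single Specht polynomial of shape $\mu(m)^\perp$ inside $I$. Since $I$ is stable under $\Sym_n$ and all Specht polynomials of a fixed shape form a single $\Sym_n$-orbit up to sign, this will yield $I^{\spe}_{\mu(m)^\perp} \subset I$; Theorem~\ref{thm:inclusionV_lamda} will then automatically promote the inclusion to $I^{\spe}_\lambda \subset I$ for every $\lambda \trianglelefteq \mu(m)^\perp$. The entire task therefore reduces to exhibiting one such Specht polynomial in $I$.

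Write $m = \prod_{c=1}^{l} X_{i_c}^{\lambda_c}$ with $\lambda_1 \geq \cdots \geq \lambda_l \geq 1$, so $l=\wt(m)$ and $d = \sum_c \lambda_c$. The shape $\mu(m)^\perp$ has $l$ "long" columns of lengths $\lambda_1 + 1, \ldots, \lambda_l + 1$, followed by $n - d - l$ columns of length one. The hypothesis $d + \wt(P_d) \leq n$ supplies at least $d$ indices outside $\Supp(P_d)$, so I can select pairwise distinct "fresh" indices $\{f_{c, k} : 1 \leq c \leq l,\, 1 \leq k \leq \lambda_c\}$, all avoiding $\Supp(P_d)$. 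I then fill column $c$ of a tableau $T$ of shape $\mu(m)^\perp$ with $\{i_c, f_{c,1}, \ldots, f_{c, \lambda_c}\}$ and distribute the remaining indices in the length-one columns. Finally, I introduce the multiplier $g := \prod_{c=1}^{l} \prod_{k=1}^{\lambda_c} X_{f_{c,k}}^{k-1}$ and the column antisymmetrizer $B := \sum_{\sigma \in C(T)} \epsilon(\sigma)\, \sigma$ of $T$; the symmetry of $I$ gives $B(Pg) \in I$.

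The core claim is the identity $B(Pg) = \pm c_m \spe_T$, where $c_m \neq 0$ is the coefficient of $m$ in $P$. Because $B$ factors as a product of single-column antisymmetrizers (the columns of $T$ partition $\{1,\ldots,n\}$) and the exponents of $mg$ in each long column $c$ are a permutation of $\{0, 1, \ldots, \lambda_c\}$, a standard Vandermonde calculation gives $B(mg) = \pm \spe_T$. For every other $m'' \in \Mon(P)$, I claim $B(m''g) = 0$. Non-vanishing of the column-$c$ antisymmetrization forces the $\lambda_c + 1$ exponents of $m''g$ there to be pairwise distinct, hence to sum to at least $\binom{\lambda_c + 1}{2}$; since $g$ contributes $\binom{\lambda_c}{2}$ to this sum, the $m''$-exponents on column $c$ sum to at least $\lambda_c$. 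Summing over the $l$ long columns and comparing with $\deg(m'') \leq d$ forces $\deg(m'') = d$ and $\Supp(m'')$ to be contained in the union of the long columns. For such $m'' \in \Mon(P_d)$ the fresh indices lie outside $\Supp(P_d) \supseteq \Supp(m'')$, so the $m''$-exponents in each column are concentrated on $i_c$; combining $\sum_c \alpha_{i_c}(m'') = d = \sum_c \lambda_c$ with the per-column inequality $\alpha_{i_c}(m'') \geq \lambda_c$ forces $\alpha_{i_c}(m'') = \lambda_c$ for every $c$, i.e.\ $m'' = m$. Hence $\spe_T = \pm c_m^{-1} B(Pg) \in I$, completing the argument.

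The step I expect to require the most care is the vanishing statement $B(m''g) = 0$ for $m'' \neq m$. It is essential that the fresh indices avoid \emph{all} of $\Supp(P_d)$ and not merely $\Supp(m)$: this is precisely where the hypothesis on $\wt(P_d)$ (rather than $\wt(m)$) enters, and it is what makes the fresh-variable exponents in any $m'' \in \Mon(P_d)$ identically zero, which collapses the column-exponent analysis to the single possibility $m'' = m$. The same degree inequality handles all lower-degree monomials in $P$ in one stroke, since $\deg(m'') < d$ is already incompatible with the distinct-exponent requirement. The only remaining subtlety is bookkeeping the sign in the Vandermonde identification $B(mg) = \pm \spe_T$, which is routine.
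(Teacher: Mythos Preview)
Your argument is correct, and the overall architecture matches the paper's: reduce via Theorem~\ref{thm:inclusionV_lamda} to producing a single Specht polynomial of shape $\mu(m)^\perp$ in $I$, then manufacture it by multiplying $P$ by an auxiliary polynomial and antisymmetrizing over the column stabilizer $C(T)$.

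The genuine difference lies in the choice of multiplier and in the mechanism that singles out $m$. The paper multiplies $P$ by the product of Vandermondes $\Delta(I_1)\cdots\Delta(I_l)$, antisymmetrizes, and shows the result equals $k_1!\cdots k_l!\,\Delta(J_1)\cdots\Delta(J_l)$ by comparing degrees and reading off the coefficient of $X_1^{k_1}\cdots X_l^{k_l}$; the factorials force a separate inductive argument (Appendix~\ref{App:A}) in positive characteristic. You instead multiply by the staircase monomial $g=\prod_{c,k}X_{f_{c,k}}^{k-1}$, so that in each long column the exponent multiset of $mg$ is exactly $\{0,1,\dots,\lambda_c\}$ and $B(mg)=\pm\spe_T$ with unit coefficient. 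Your elimination of the other monomials is also different in flavour: rather than a leading-coefficient argument, you use the pigeonhole bound ``distinct exponents in a column of height $\lambda_c+1$ sum to at least $\binom{\lambda_c+1}{2}$'' to force $\deg(m'')=d$ and $\Supp(m'')\subset\{i_1,\dots,i_l\}$, and then equality in the per-column bounds forces $m''=m$. This is where the hypothesis on $\wt(P_d)$ (not just $\wt(m)$) is used, exactly as you note.

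The payoff of your variant is that $B(Pg)=\pm c_m\,\spe_T$ with $c_m\in\K^\times$, so the proof is characteristic-free in one pass, without the paper's appendix. The paper's Vandermonde multiplier has the minor expository advantage that the divisibility $\Delta(J_1)\cdots\Delta(J_l)\mid R$ is immediate from antisymmetry, but at the cost of the factorial. Both routes are short; yours is arguably the cleaner of the two.
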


According to Theorem~\ref{thm:inclusionV_lamda}, it is enough to prove that $I$ contains the Specht polynomials of shape $\mu(m)^\perp$. Hence we only need to prove:

\begin{proposition}\label{thm:Specht}
Let $I \subset \K[X_1,\ldots,X_n]$ be a symmetric ideal. Assume that there exists $P \in I$ of degree $d$, such that $d + \wt(P_d)\leqslant n$. Then, for every  monomial $m \in \Mon(P_d)$, the ideal $I$ contains every $\spe_T$ 
for which  $\sh(T) = \mu(m)^\perp$.
In other words $I^{\spe}_{\mu(m)^\perp} \subset I$.

\end{proposition}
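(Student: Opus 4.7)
The plan is to construct, for a single specific tableau $T$ of shape $\mu(m)^\perp$, the Specht polynomial $\spe_T$ inside $I$, using $P$ together with multiplication by an auxiliary monomial and a suitable antisymmetrization. Since $I$ is symmetric and $\Sym_n$ acts transitively on Young tableaux of a given shape (sending each Specht polynomial to $\pm$ another of the same shape), exhibiting a single such $\spe_T$ in $I$ is enough to place every Specht polynomial of shape $\mu(m)^\perp$ in $I$.

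\textbf{Construction.} Write $m = X_{i_1}^{\lambda_1}\cdots X_{i_l}^{\lambda_l}$ with $\lambda_1 \geq \cdots \geq \lambda_l \geq 1$ summing to $d$. Since $\{i_1, \ldots, i_l\} \subseteq \Supp(P_d)$ and $\wt(P_d) + d \leq n$, I can choose $d$ fresh indices $y_{j,k}$ ($1\leq j\leq l$, $1\leq k\leq \lambda_j$) lying outside $\Supp(P_d)$. Put $B_j := \{i_j, y_{j,1}, \ldots, y_{j,\lambda_j}\}$; these sets are pairwise disjoint and $|B_j| = \lambda_j+1$. Define
\[
g := \prod_{j=1}^{l}\prod_{k=1}^{\lambda_j} X_{y_{j,k}}^{k-1}, \qquad A := \sum_{\sigma \in \Sym_{B_1}\times\cdots\times \Sym_{B_l}} \mathrm{sgn}(\sigma)\,\sigma .
\]
Because $I$ is symmetric, $A(P\cdot g)\in I$. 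By construction $A(P\cdot g)$ is antisymmetric under every $\Sym_{B_j}$, hence divisible by $\prod_j \Delta(B_j)$, which is precisely $\spe_T$ for the tableau $T$ of shape $\mu(m)^\perp$ whose first $l$ columns are $B_1,\ldots,B_l$ and whose remaining columns (all of length $1$) carry the unused indices. A degree count gives
\[
\deg(P\cdot g) \;=\; d + \sum_{j}\binom{\lambda_j}{2} \;=\; \sum_{j}\binom{\lambda_j+1}{2} \;=\; \deg\, \spe_T,
\]
so $A(P\cdot g)$ must equal $c\cdot \spe_T$ for some scalar $c\in\K$.

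\textbf{Main obstacle.} The heart of the proof is to verify $c\neq 0$, and this is precisely where the hypothesis $d+\wt(P_d)\leq n$ enters. The plan is to expand $A(P\cdot g)=\sum_{m'\in \Mon(P)} c_{m'}\, A(m'\cdot g)$ and argue that only $m' = m$ contributes. For $\deg m' < d$, the polynomial $A(m'g)$ is antisymmetric of degree strictly below $\deg \spe_T$, hence zero. For $m'\in \Mon(P_d)$, the choice $y_{j,k}\notin \Supp(P_d)$ ensures that the exponents of $m'\cdot g$ on $B_j$ are $(\alpha_j, 0, 1, \ldots, \lambda_j-1)$, where $\alpha_j$ is the exponent of $X_{i_j}$ in $m'$. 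Since $A$ annihilates monomials with a repeated exponent on some $B_j$, a surviving $m'$ must satisfy $\alpha_j \geq \lambda_j$ for every $j$; combined with $\sum_j \alpha_j\leq \deg m' = d = \sum_j \lambda_j$, this forces $\alpha_j=\lambda_j$ for all $j$ and hence $m' = m$. A short computation then shows $A(m\cdot g) = \pm \spe_T$ (the exponent vector on $B_j$ is a cyclic shift of $(0,1,\ldots,\lambda_j)$, of signature $(-1)^{\lambda_j}$), so $c = \pm c_m \neq 0$, which completes the proof.
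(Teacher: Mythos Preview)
Your argument is correct and follows the same overall strategy as the paper's main-text (characteristic-$0$) proof: multiply $P$ by an auxiliary polynomial in the ``fresh'' variables, antisymmetrize over the Young subgroup $\prod_j \Sym_{B_j}$, use antisymmetry together with the degree count to conclude the result is $c\cdot\spe_T$, and then check $c\neq 0$. The one substantive difference is your choice of auxiliary factor: the paper multiplies by the Vandermonde product $\prod_j\Delta(I_j)$ in the fresh variables, whereas you multiply by the staircase monomial $g=\prod_{j,k}X_{y_{j,k}}^{k-1}$, which is precisely the leading monomial of that Vandermonde product. This change pays off in the constant computation: the paper finds $c=k_1!\cdots k_l!$ and is therefore forced to assume characteristic $0$ (handling the general case by a separate inductive argument in the Appendix), while your exponent-matching argument singles out $m$ as the sole contributing monomial of $P_d$ and yields $c=\pm c_m\neq 0$, so your proof already works for every characteristic $\neq 2$. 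If one further replaces the global ``antisymmetric $\Rightarrow$ divisible by $\spe_T$'' step by the monomial-by-monomial observation that each $A(m'g)$ is a product of generalized Vandermonde determinants in the $B_j$-variables (hence divisible by $\prod_j\Delta(B_j)$ as an identity over $\mathbb{Z}$), the remaining characteristic-$2$ restriction disappears as well.
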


In the following proof, we will assume that the characteristic of $\K$ is zero. This allows for more conceptual proof. 
This assumption on the characteristic ensures that the factorials in the end of the proof do not vanish.
We provide a general proof in the Appendix.
\begin{proof} Let $\K$ be a field of characteristic 0.
Since the ideal $I$ is symmetric, we may assume that \[m=X_1^{k_1}X_2^{k_2}\cdots X_l^{k_l}.\] and that $\Supp(P_d)=\{1,\ldots,\wt(P_d)\}$. Its associated partition is \[\mu:=\mu(m)=(k_1+1,k_2+1,\ldots,k_l+1,\underbrace{1,\ldots,1}_{n-d-l}).\] The statement says that the ideal $I$ contains any Specht polynomial of the form \[\Delta_1\Delta_2\cdots\Delta_l,\] where  the $\Delta_i$ are Vandermonde polynomials  in disjoint sets of variables, each of size $k_i+1=\mu_i$. Thanks to its symmetry, it is enough to show that $I$ contains one such polynomial. 

Our strategy consists in using  $X_i^{k_i}$ to build a Vandermonde polynomial involving $X_i$ and $k_i$ variables that do not appear in $P_{d}$. Our assumption on $P_d$ guarantees that there are enough free variables to do so. 

More precisely, we can take $I_1, \ldots, I_l$, disjoint subsets of $\{1,\ldots,n\}$ such that for any $1\leq i \leq l$, there are $k_i$ elements in $I_i$, and none of them appears in $P_d$. Let, for $1\leq i \leq l$,
\[
J_i = \{i\} \cup I_i.
\]
We will show that there exist polynomials $R_\sigma \in \K[X_1,\ldots,X_n]$, for $\sigma \in \Sym_n$ such that:
\[
\Delta(J_1)\cdots\Delta(J_l) = \sum_{\sigma \in \Sym_n} R_\sigma \sigma P.
\]
Here, applying the strategy used to prove Theorem~\ref{thm:inclusionV_lamda} we give explicit polynomials $R_\sigma$ when the characteristic of $\K$ is $0$. In the general case, we can give a recursive construction of these polynomials; we postpone this construction to Appendix~\ref{App:A}.
Consider the polynomials
\[
Q=\Delta(I_1)\cdots\Delta(I_l)P
\]
and
\[
R=\sum_{\sigma \in \Sym_{J_1}\times \cdots \times\Sym_{J_l}} \epsilon(\sigma) \sigma Q,
\]
where $\Sym_{J_1}\times \cdots \times\Sym_{J_l}$ is seen as a subgroup of $\Sym_n$.
By construction, for every $\rho \in \Sym_{J_1}\times \cdots \times\Sym_{J_l}$, 
\[
\rho R = \epsilon(\rho) R
\]
so that $\Delta(J_1)\cdots\Delta(J_l)$ divides $R$. Furthermore, since 
\begin{align*}
\deg (Q) & = d + \sum_{i=1}^l \frac{k_i(k_i-1)}{2} \\ & = \sum_{i=1}^l \frac{k_i(k_i-1)}{2} + k_i
\\ & = \sum_{i=1}^l \frac{k_i(k_i+1)}{2} 
\\ & = \deg (\Delta(J_1)\cdots\Delta(J_l)),
\end{align*}
we get 
\[
R = c \Delta(J_1)\cdots\Delta(J_l)
\]
with $c\in \K$. In order to check that $c$ is not $0$, we look at the coefficent of $R$ corresponding with $m=X_1^{k_1}\cdots X_l^{k_l}$, seen as an element of $(\K[X_{l+1},\ldots,X_n])[X_1,\ldots,X_l]$. 

In $Q$, this coefficient is $\Delta(I_1)\cdots\Delta(I_l)$. If, for $1\leq i \leq l$, the permutation $\sigma \in \Sym_{J_1}\times \cdots \times\Sym_{J_l}$ does not let $i$ invariant, the assumption on $P_d$ ensures that the coefficient of $m$ in $\sigma Q$ will be $0$. Therefore, the coefficient of $R$ corresponding with $m$ is 
\[
\sum_{\sigma \in \Sym_{I_1}\times \cdots \times\Sym_{I_l}} \epsilon(\sigma) \sigma \Delta(I_1)\cdots\Delta(I_l) =  k_1!\cdots k_l!\Delta(I_1)\cdots\Delta(I_l)
\]
and hence $R =  k_1!\cdots k_l! \Delta(J_1)\cdots\Delta(J_l)$.

\end{proof}

\section{Applications}\label{sec:appl}

\subsection{Computing points in symmetric varieties}\label{ssec:DegreePrinciple}

Let $n$ be an integer and $I$ be a symmetric ideal in $\K[X_1,\ldots,X_n]$. What can we say about the variety $V(I)$? For instance, can we algorithmically  decide if $V(I)$ is empty in an efficient manner making use of the structure of $I$?

Over any real closed field $\R$ the so-called \emph{half-degree principle} \cite{timofte2003positivity} can be used to simplify the algorithmical task of root finding.  This statement says (\cite[Corollary 1.3]{riener2012degree}):

\begin{theorem}\label{thm:HalfDegreeCordian}
Let $\K$ be a real closed field, and let $P$ be a symmetric polynomial in $\K[X_1,\ldots,X_n]$ of degree $d$, and let $k=\max(2,\lfloor \frac{d}{2} \rfloor )$. Then there exists $x \in \K^n$ such that $P(x)=0$ if and only if there exists $y\in \K^n$ with at most $k$ distinct coordinates such that $P(y)=0$.
\end{theorem}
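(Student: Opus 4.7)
The approach I would follow is the critical-point argument of Timofte, as reworked by Riener. First, I would reduce to $\K = \R$ by Tarski--Seidenberg, since the statement is a first-order sentence in the theory of real closed fields. Since $V(P)$ may be unbounded, I would compactify by intersecting with a ball: for $R$ large, $K_R = V(P) \cap \{\sum_i X_i^2 \leq R\}$ is a compact $\Sym_n$-invariant semi-algebraic set, non-empty as soon as $V(P)$ is, and it suffices to locate a single point of $K_R$ having at most $k$ distinct coordinates.

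The next step is to minimise a generic symmetric polynomial $F$ of degree at most $d$ over $K_R$. At an optimiser $x^\ast$ the Lagrange condition gives, componentwise,
\[
\frac{\partial F}{\partial X_i}(x^\ast) \;=\; \lambda\, \frac{\partial P}{\partial X_i}(x^\ast) + 2\mu\, x_i^\ast \qquad (i = 1, \ldots, n),
\]
for real multipliers $\lambda, \mu$. Since $F$, $P$ and the ball constraint are all $\Sym_n$-invariant, both sides coincide whenever $x_i^\ast = x_j^\ast$, so the system factors through the $r$ distinct values $u_1, \ldots, u_r$ of the coordinates of $x^\ast$. Expressing $F$ and $P$ in power sums $p_1, \ldots, p_d$ and using $\partial p_j/\partial X_i = j X_i^{j-1}$, the $n$ equations collapse to a single univariate polynomial identity $G(T) = 0$ that each $u_j$ must satisfy, whence $r \leq \deg G$.

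The heart of the proof, and the step I expect to be the main obstacle, is bounding $\deg G \leq k$. A naive count yields only $\deg G \leq d - 1$, since $\deg F, \deg P \leq d$ and one loses a degree upon differentiating. Timofte's insight is that only the power sums of certain parities contribute to the top of $G$, which effectively halves the degree and delivers the sharp bound $\lfloor d/2 \rfloor$ (with the low-degree cases absorbed into the $\max(2, \cdot)$). With that lemma in hand, $x^\ast$ itself has at most $k$ distinct coordinates, and the theorem follows by letting $R \to \infty$ together with a compactness argument on the finitely many orbit-type strata. For contrast, Section~\ref{sec:Main} suggests a purely algebraic alternative: the symmetric principal ideal $\langle P \rangle$ contains every Specht polynomial of shape dominated by $\mu(m)^\perp$ for $m \in \Mon(P_d)$ by Theorem~\ref{thm:SpechtTous}, and Corollary~\ref{thm:CaracSpechtHlambda} then confines $V(P)$ to orbit strata $H_\nu$ with $\nu \not\trianglelefteq \mu(m)^\perp$. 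This route is cleaner but yields only the weaker bound $r \leq d$ and requires the additional hypothesis $n \geq d + \wt(P_d)$, so it does not reproduce the sharp half-degree principle on its own.
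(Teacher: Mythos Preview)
The paper does not prove Theorem~\ref{thm:HalfDegreeCordian}: it is quoted from \cite[Corollary~1.3]{riener2012degree} and used as a black box to derive Corollary~\ref{cor:degreeprinciple}. There is no in-paper argument to compare your sketch against.

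That said, your outline follows the Timofte--Riener variational strategy in spirit, and you correctly identify the degree bound on the univariate polynomial $G$ as the crux. The gap is in the mechanism you propose for halving that degree. The sharpening from $d-1$ to $\lfloor d/2\rfloor$ does not come from any parity restriction on the contributing power sums, and your setup --- optimising a generic symmetric $F$ of degree $\le d$ over $V(P)\cap B_R$ with multipliers for both $P$ and the sphere --- can never do better than $\deg G\le d-1$, exactly as your own ``naive count'' shows. In the arguments of \cite{timofte2003positivity,riener2012degree} one does not introduce an auxiliary $F$; rather one works directly with the first $k=\lfloor d/2\rfloor$ power sums as constraints and extremises a single higher power sum, so that the resulting Lagrange relation $(k+1)T^{k}=\sum_{j\le k}\lambda_j\,jT^{j-1}$ has degree exactly $k$ in $T$. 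Passing from such an extremal point to one where $P$ actually vanishes (rather than where an inequality holds) then needs a further step exploiting connectedness of the relevant orbit-type strata. So the plan as written has a genuine gap at its central step: the ``parities'' heuristic would have to be replaced by the fix-$p_1,\dots,p_k$ / optimise-$p_{k+1}$ scheme, or something equivalent, before the argument can close.

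Your closing assessment of the Specht-ideal alternative via Theorem~\ref{thm:SpechtTous} is accurate: that route yields only the bound $d$ rather than $\lfloor d/2\rfloor$, and it requires the extra hypothesis $n\ge d+\wt(P_d)$, which is precisely how the paper uses it downstream.
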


This implies the following result on symmetric ideals:
\begin{corollary}\label{cor:degreeprinciple}
Let $\K$ be a real closed field, and let $I$ be a symmetric ideal of $\K[X_1,\ldots,X_n]$, generated by $P_1,\ldots,P_l$. 
Let $d=\max(\deg(P_1), \ldots, \deg(P_l))$. Then $V(I)$ is non empty if and only if it contains a point $x\in \K^n$ with at most $d$ distinct coordinates.
\end{corollary}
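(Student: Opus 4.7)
The ``if'' direction being trivial, the plan is to package the whole system $P_1=\cdots=P_l=0$ (which defines the symmetric ideal $I$) into a single symmetric polynomial of controlled degree and then apply the half-degree principle of Theorem \ref{thm:HalfDegreeCordian} to that polynomial.

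First, I define
\[
Q := \sum_{i=1}^{l}\sum_{\sigma \in \Sym_n}(\sigma P_i)^2 \;\in\; \K[X_1,\ldots,X_n].
\]
Three properties of $Q$ make it the right object. It is symmetric, because for any $\tau \in \Sym_n$ the re-indexing $\sigma' = \tau \sigma$ of the inner sum gives $\tau Q = Q$. Its degree satisfies $\deg(Q) \leq 2d$, since permuting variables preserves degree and squaring doubles it. And its zero set in $\K^n$ equals $V(I)$: because $\K$ is real closed, a finite sum of squares vanishes exactly when every square does, and by symmetry of $I$ each $\sigma P_i$ lies in $I$ and so vanishes on $V(I)$, while conversely $Q(x)=0$ forces each $P_i(x) = 0$ upon selecting $\sigma = \mathrm{id}$.

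Applying Theorem \ref{thm:HalfDegreeCordian} to the symmetric polynomial $Q$ of degree at most $2d$ then yields that $V(Q) = V(I)$ is non-empty if and only if it contains a point with at most $\max\bigl(2, \lfloor 2d/2 \rfloor\bigr) = \max(2,d)$ distinct coordinates, which is exactly $d$ as soon as $d \geq 2$. The mild edge case $d = 1$ must be handled by hand: then $I$ is generated by linear polynomials, so $V(I)$ is an $\Sym_n$-stable affine subspace of $\K^n$, and given any $x \in V(I)$ the averaged point $\frac{1}{n!}\sum_{\sigma \in \Sym_n} \sigma x \in V(I)$ has all coordinates equal to $\frac{1}{n}\sum_j x_j$, giving a point with one distinct coordinate.

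There is no deep obstacle in this argument. The only point requiring care is choosing the squared polynomial so that all of symmetry, degree bound, and zero-set identity survive simultaneously: summing the squares over the full group $\Sym_n$ (rather than just squaring the generators) is what preserves symmetry of $Q$ without inflating the degree past $2d$, and it is real-closedness of $\K$ that lets the single equation $Q=0$ replace the list of equations $P_i=0$.
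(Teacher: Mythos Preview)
Your proof is correct and follows exactly the approach of the paper: form the symmetric polynomial $Q = \sum_{i=1}^l \sum_{\sigma\in \Sym_n} (\sigma P_i)^2$, observe that $V(Q)=V(I)$ over a real closed field, and apply Theorem~\ref{thm:HalfDegreeCordian}. You are in fact more careful than the paper, which does not separately address the edge case $d=1$ where $\max(2,\lfloor 2d/2\rfloor)=2>d$.
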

\begin{proof}
Over a real closed field the variety $V(I)$ is exactly the variety defined by 
\[
Q=\sum_{i=1}^l \sum_{\sigma\in \Sym_n} \sigma(P_i)^2
\]
and we can apply Theorem~\ref{thm:HalfDegreeCordian}.
\end{proof}

The algorithmic implications of this result are the following. 
Take $\mu\vdash n$ of length $d$. For every polynomial $P\in \K[X_1,\ldots, X_n]$ we consider $$P^\mu:=P(\underbrace{Z_1,\ldots, Z_1}_{\mu_1},\underbrace{Z_2,\ldots, Z_2}_{\mu_2},\ldots, \underbrace{Z_d,\ldots, Z_d}_{\mu_d})$$ and denote by $I^\mu\subset \K[Z_1,\ldots,Z_d]$ the resulting ideal in $d$ variables.
Moreover consider the topological closure $\overline{H}_\mu$ of $H_\mu$ and the map
\[\Phi_\mu:\,V(I^\mu)\longrightarrow (V(I)\cap \overline{H}_\mu)/S_n   
\]
which associates to a point  $x=(x_1,\ldots,x_d)\in V(I^\mu)$ the $S_n$-orbit of the point
$$x=(\underbrace{x_1,\ldots, x_1}_{\mu_1},\underbrace{x_2,\ldots, x_2}_{\mu_2},\ldots, \underbrace{x_d,\ldots, x_d}_{\mu_d}).$$ This map is clearly surjective, and from the natural decomposition 
\[
V(I) = \bigcup_{\mu \vdash n} (V(I) \cap H_\mu) = \bigcup_{\nu \vdash n} (V(I) \cap \overline{H_\nu})
\]
we thus get
\[
V(I)/S_n = \bigcup_{\nu \vdash n} \Phi_\nu(V(I^\nu)).
\]
Then Corollary~\ref{cor:degreeprinciple} says precisely that $V(I)$ is empty if and only if $V(I^{\mu})$
is empty for every partition $\mu$ of $n$ with $\len(\mu) \leq d$. 
  Since the number of $d$-partitions of $n$ is bounded by $(n+1)^d$ the original problem in $n$ variables reduces to a polynomial number of problems in $d$ variables.

Our results yield a stronger version of this principle, under additional assumption on the support of the polynomials: On the one hand our results are valid for any field, and on the other hand, not only our varieties contain points with few distinct coordinates, but they contain \emph{only} points with few distinct coordinates.
More precisely, Theorem~\ref{thm:SpechtTous} gives, in this context:

\begin{theorem}\label{thm:main}
Let $I\subset \K[X_1,\ldots X_n]$ be a symmetric ideal. Assume that there exists $P\in I$ of degree $d$ such that $\wt(P_d) + d \leqslant n$ and let $m \in Mon(P_d)$. Then  
$$V(I)\cap H_\lambda =\emptyset\, \text{  for all } \lambda \trianglelefteq \mu(m)^\perp.$$
In other words, 
\[
V(I)/S_n = \bigcup_{\nu\not\trianglelefteq \mu(m)^{\perp}}\Phi_\nu(V(I^\nu)).
\]
\end{theorem}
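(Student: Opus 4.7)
The plan is to combine Theorem~\ref{thm:SpechtTous} with Proposition~\ref{prop:SpechtVar}(i) (equivalently Corollary~\ref{thm:CaracSpechtHlambda}) to translate the algebraic conclusion (the ideal $I$ contains certain Specht ideals) into a geometric statement about orbit types of points in $V(I)$.

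First I would fix $\lambda \trianglelefteq \mu(m)^\perp$. By Theorem~\ref{thm:SpechtTous} applied to our polynomial $P$ and monomial $m$, the symmetric ideal $I$ contains the Specht ideal $I^{\spe}_\lambda$. Passing to varieties reverses inclusions, so $V(I) \subset V(I^{\spe}_\lambda) = V_\lambda$. Now pick any $x \in H_\lambda$; by Proposition~\ref{prop:SpechtVar}(i) the point $x$ lies outside $V_\lambda$, hence outside $V(I)$. This gives $V(I) \cap H_\lambda = \emptyset$, which is the first claim of the theorem.

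For the reformulation in terms of the maps $\Phi_\nu$, I would start from the decomposition $V(I) = \bigcup_{\nu \vdash n}\bigl(V(I) \cap H_\nu\bigr)$ already appearing in the discussion preceding the theorem. By what we just proved, all summands with $\nu \trianglelefteq \mu(m)^\perp$ vanish, so
\[
V(I) = \bigcup_{\nu \not\trianglelefteq \mu(m)^\perp} \bigl(V(I) \cap H_\nu\bigr) \subset \bigcup_{\nu \not\trianglelefteq \mu(m)^\perp} \bigl(V(I) \cap \overline{H_\nu}\bigr).
\]
Quotienting by $S_n$ and using the surjectivity of $\Phi_\nu$ onto $\bigl(V(I) \cap \overline{H_\nu}\bigr)/S_n$ established in the paper yields $V(I)/S_n = \bigcup_{\nu \not\trianglelefteq \mu(m)^\perp} \Phi_\nu(V(I^\nu))$, as desired.

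There is no real obstacle here: the theorem is essentially a direct corollary of the two already-established results, with Theorem~\ref{thm:SpechtTous} doing all of the actual work. The only point demanding a moment of care is the direction of the dominance order when combining ``$V(I) \subset V_\lambda$ for all $\lambda \trianglelefteq \mu(m)^\perp$'' with ``$H_\lambda \cap V_\lambda = \emptyset$'', but the two mesh exactly because one applies $V_\lambda \supset V_\nu$ whenever $\nu \trianglerighteq \lambda$ (Theorem~\ref{thm:inclusionV_lamda}) in order to intersect consistently with each $H_\lambda$.
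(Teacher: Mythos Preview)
Your proof is correct and follows essentially the same route as the paper: invoke Theorem~\ref{thm:SpechtTous} (the paper cites the equivalent Proposition~\ref{thm:Specht} plus Corollary~\ref{thm:CaracSpechtHlambda}) to get $V(I)\subset V_\lambda$ for each $\lambda\trianglelefteq\mu(m)^\perp$, then use Proposition~\ref{prop:SpechtVar}(i) to exclude $H_\lambda$. Your handling of the second part is in fact slightly more streamlined than the paper's, since the inclusion $\bigcup_{\nu\not\trianglelefteq\mu(m)^\perp}(V(I)\cap\overline{H_\nu})\subset V(I)$ is trivial and need not be argued via $\nu\trianglelefteq\Lambda(x)$ once the first part is in hand.
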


\begin{proof}
According to Proposition~\ref{thm:Specht}, the variety $V(I)$ is contained in the variety $V_{\mu^\perp}$ associated with the Specht ideal $I_{\mu^\perp}^{\spe}$. Corollary \ref{thm:CaracSpechtHlambda} then yields
\[
V(I) \subset \bigcup_{\lambda\not\trianglelefteq \mu(m)^{\perp}}H_\lambda.
\]
Since $\K^n$ is the disjoint union of the subsets $H_\lambda$, it follows that for all $\lambda$ with $\lambda \trianglelefteq \mu(m)^\perp$, $V(I)\cap H_\lambda = \emptyset$. 
Furthermore, we can write 
\[
V(I) = \bigcup_{\lambda\not\trianglelefteq \mu(m)^{\perp}} (V(I)\cap H_\lambda).
\]
Thus, to prove the second part of the statement, it is enough to prove that 
\[
\bigcup_{\lambda\not\trianglelefteq \mu(m)^{\perp}} (V(I)\cap H_\lambda) = \bigcup_{\nu\not\trianglelefteq \mu(m)^{\perp}} (V(I)\cap \overline{H_\nu}).
\]
One inclusion is trivial, we focus on the other one. Assume that $x \in \overline{H_\nu}$. Then naturally, $\nu \trianglelefteq \Lambda(x)$. So if $\nu\not\trianglelefteq \mu(m)^{\perp}$, we also have $\Lambda(x) \not\trianglelefteq \mu(m)^{\perp}$.
\end{proof}

Hence, if one is able to compute the points in the variety $V(I^{\nu})$, one gets all the points of $V(I)$. Also note that the length of the partitions $\nu$ is at most $d$, this comes from the following observation:

\begin{proposition}Let $n$ be an integer, and $m$ be a monomial of degree $d$, with $d + \wt(m) \leq n$. Then for every  partition $\lambda$ of $n$ such that $\len(\lambda) > d $,
\[
\mu(m)^\perp \trianglerighteq \lambda.
\]
\end{proposition}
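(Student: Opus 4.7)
The plan is to exploit the duality $\mu(m)^\perp \trianglerighteq \lambda \Leftrightarrow \lambda^\perp \trianglerighteq \mu(m)$ recorded in Section~\ref{sec:prelim} and to prove the latter dominance by comparing partial sums, using $d+i$ as a common threshold that upper-bounds the partial sums of $\mu(m)$ while simultaneously lower-bounding those of $\lambda^\perp$.

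First I would unpack $\mu(m) = (k_1+1,\ldots,k_l+1,1,\ldots,1)$, where $l = \wt(m)$, $\sum_{j=1}^l k_j = d$, and there are $n-d-l$ trailing ones so that $\len(\mu(m)) = n-d$. For $1 \leq i \leq l$, $\sum_{j=1}^i \mu(m)_j = i + \sum_{j=1}^i k_j \leq i + d$; for $l < i \leq n-d$, the later contributions are all $1$, giving $\sum_{j=1}^i \mu(m)_j = (d+l) + (i-l) = d+i$. In every case, $\sum_{j=1}^i \mu(m)_j \leq d + i$.

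Second I would rewrite the partial sums of $\lambda^\perp$ as column sums of the Young diagram of $\lambda$, namely $\sum_{j=1}^i (\lambda^\perp)_j = \sum_k \min(\lambda_k, i)$. Using $\len(\lambda) > d$, I would obtain the matching lower bound $\sum_k \min(\lambda_k, i) \geq d + i$ by splitting into two cases: if $i \leq \lambda_1$, the first part contributes $i$ while the remaining $\len(\lambda) - 1 \geq d$ nonzero parts each contribute at least $1$; if $i > \lambda_1$, the sum equals $n$, and $n \geq d+i$ in the only range that matters for the dominance test, namely $i \leq \len(\mu(m)) = n-d$.

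Combining the two bounds gives $\sum_{j=1}^i (\lambda^\perp)_j \geq d + i \geq \sum_{j=1}^i \mu(m)_j$ for every $i$ with $1 \leq i \leq \min(\len(\lambda^\perp),\len(\mu(m)))$, which yields $\lambda^\perp \trianglerighteq \mu(m)$ and thus $\mu(m)^\perp \trianglerighteq \lambda$. No deep ingredient is needed; the only care required is the combinatorial bookkeeping across the two regimes $i \leq l$ versus $i > l$ for $\mu(m)$ and $i \leq \lambda_1$ versus $i > \lambda_1$ for $\lambda^\perp$, and the recognition that $d+i$ is the right common quantity to sandwich between them.
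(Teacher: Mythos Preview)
Your proof is correct, but it takes a different route from the paper's. The paper argues by transitivity through the hook partition: first $\mu(m)^\perp \trianglerighteq (n-d,1,\ldots,1)$ because $\mu(m)$ has length $n-d$, and then $(n-d,1,\ldots,1)\trianglerighteq \lambda$ whenever $\len(\lambda)>d$, the latter by a short contradiction argument on partial sums. You instead dualise and sandwich both sides between the linear threshold $d+i$. The two approaches are in fact closely related: the partial sums of the dual hook $(d+1,1,\ldots,1)=(n-d,1,\ldots,1)^\perp$ are precisely $d+i$, so your threshold is the intermediate partition of the paper, viewed through the duality. Your argument has the virtue of being a single direct computation; the paper's has the virtue of making the extremal case $m=X_1^d$ explicit. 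One small remark: since you restrict to $i\leq \min(\len(\lambda^\perp),\len(\mu(m)))=\min(\lambda_1,n-d)$, the case $i>\lambda_1$ never actually occurs in the range you need, so your Case~2 is harmless but redundant.
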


\begin{proof}
The proof consists in two steps. First we prove that
\[
\mu(m)^\perp \trianglerighteq (n-d,\underbrace{1,\ldots,1}_{d}).
\]
Indeed, if $m=X_1^{k_1}\cdots X_l^{k_l}$, 
\[
\mu(m)=(k_1+1,k_2+1,\ldots,k_l+1,\underbrace{1,\ldots,1}_{n-d-l})
\]
has length $n-d$, so that 
\[
\mu(m)^\perp=(n-d,\ldots)\trianglerighteq (n-d,\underbrace{1,\ldots,1}_{d}).
\]
Second, if $\len(\lambda)>d$, then  
\[
(n-d,\underbrace{1,\ldots,1}_{d}) \trianglerighteq \lambda.
\]
Indeed, if not, there exists $j$ such that 
\[
\sum_{i=1}^j \lambda_i > (n-d) + (j-1).
\] 
In this case, since $\len(\lambda) \geq d+1$,
\begin{align*}
n  = \sum_{i=1}^{\len(\lambda)} \lambda_i  & \geq \sum_{i=1}^j \lambda_i + \len(\lambda)-j
\\ & > n - d + j -1 + \len(\lambda)-j
\\ & > n.
\end{align*}

\end{proof}
\begin{remark}
The proposition above shows that we can always ensure that every point of the variety $V(I)$ has at most $d$ distinct coordinates. 
If the monomial $m$ is $X_1^d$ then $\mu(m)^\perp=(n-d,1,\ldots,1)$ and this is the only case where we need to consider all $d$-partitions of $n$.  

Already for the monomial $m=X_1^{d-1}X_2$, we have $\mu(m) = (d,2,1, \ldots,1) $, and $\mu(m)^\perp=(n-d,2,1,\ldots,1)$.
Then for every $2\leq k \leq (n-d-2)/2$, the partition $(n-d-(k-2), k, 1, \ldots, 1)$ has length $d$ and is dominated by $\mu(m)^\perp$: they are among the partitions that we do not need to consider.

The most favourable case will be $m=X_1X_2 \cdots X_d$, where $\mu(m)^\perp = (n-d,d)$. In this case, the only partitions $\lambda=(\lambda_1, \ldots, \lambda_t)$ that are not dominated by $\mu(m)^\perp$ are the ones with $\lambda_1 > n-d$. 

Therefore, a fine analysis on the actual monomials of highest degree in the generators of $I$ allows to reduce the number of partitions that need to be considered, which can be seen as a stronger version of the degree principle. 
\end{remark}
One further natural consequence is that our variety is contained in a finite union of $d$-dimensional subspaces, hence:
\begin{corollary}
Let $I \subset \K[X_1,\ldots,X_n]$ be a symmetric ideal. Assume that there exists $P \in I$ of degree $d$, such that $d + \wt(P_d)\leqslant n$. Then the dimension of the variety $V(I)$ is at most  $d$. 
\end{corollary}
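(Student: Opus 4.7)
The plan is to combine Theorem~\ref{thm:main} with the preceding proposition to confine $V(I)$ inside a relatively small subset of $\K^n$, and then observe that this subset is a finite union of linear subspaces of small dimension.

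First I would pick any monomial $m \in \Mon(P_d)$, which is allowed since $\Mon(P_d) \neq \emptyset$. Theorem~\ref{thm:main} gives
\[
V(I)\cap H_\lambda = \emptyset \text{ for all } \lambda \trianglelefteq \mu(m)^\perp,
\]
and the proposition immediately preceding the corollary asserts that every partition $\lambda$ of $n$ with $\len(\lambda) > d$ satisfies $\mu(m)^\perp \trianglerighteq \lambda$. Combining these two facts, $V(I)$ cannot meet any $H_\lambda$ whose length exceeds $d$. Since $\K^n = \bigcupdot_{\lambda \vdash n} H_\lambda$, this yields the inclusion
\[
V(I) \subset \bigcup_{\substack{\lambda \vdash n \\ \len(\lambda) \leqslant d}} H_\lambda.
\]

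Next I would argue that the right-hand side is contained in a finite union of linear subspaces of $\K^n$ of dimension at most $d$. Indeed, a point $x \in \K^n$ lies in $H_\lambda$ with $\len(\lambda) = k \leqslant d$ exactly when its coordinates take exactly $k$ distinct values in some specific pattern prescribed by $\lambda$. For each partition $\{B_1,\ldots,B_k\}$ of $\{1,\ldots,n\}$ into $k$ nonempty blocks, the locus $L_{B_1,\ldots,B_k} := \{x \in \K^n : x_i = x_j \text{ whenever } i,j \text{ lie in the same } B_r\}$ is a linear subspace of dimension $k \leqslant d$, and every point in some $H_\lambda$ with $\len(\lambda) \leqslant d$ belongs to one of the finitely many $L_{B_1,\ldots,B_k}$ with $k \leqslant d$.

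Therefore $V(I)$ is contained in a finite union of linear subspaces of $\K^n$, each of dimension at most $d$; hence $\dim V(I) \leqslant d$. There is no real obstacle here: the entire argument rides on Theorem~\ref{thm:main} and the length bound already established in the preceding proposition, and the only remaining content is the elementary observation that the set of points with at most $d$ distinct coordinates is a finite union of coordinate-diagonal linear subspaces of dimension $\leqslant d$.
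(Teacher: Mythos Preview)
Your proof is correct and follows exactly the route the paper intends: the corollary is stated immediately after the remark that ``our variety is contained in a finite union of $d$-dimensional subspaces,'' and your argument simply spells out this observation, combining Theorem~\ref{thm:main} with the preceding proposition to show $V(I)\subset\bigcup_{\len(\lambda)\leqslant d}H_\lambda$ and then identifying this union with a finite union of diagonal linear subspaces of dimension at most $d$.
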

In a more general setup, Nagel and R\"{o}mer \cite{nagelroemer} study sequences of symmetric ideals.  They show in particular that the dimension of the ideals they study is a linear function in $n$. In our more restricted framework, we thus obtain a stabilization of the dimension of such sequences.  

\subsection{Isotypic components of symmetric ideals}\label{ssec:isotypic}
 The action of the symmetric group $\Sym_n$ on $\K[X_1,\ldots,X_n]$ is linear, giving the polynomial ring the structure of a $\K[\Sym_n]$-module. If we assume that the characteristic of $\K$ is $0$, then every   $\K[\Sym_n]$-module can be decomposed as a direct sum of irreducible submodules.  It is well known (see \textit{e.g.} \cite{S01}) that the irreducible  $\K[\Sym_n]$-modules are in correspondence with the partitions of $n$. These modules are called Specht modules, denoted by $S^\lambda$. It follows that every $\K[\Sym_n]$-module $U$ can be uniquely written as
\[
U\simeq \bigoplus_{\lambda \vdash n} U_\lambda, 
\]
where for every partition $\lambda$ of $n$, $U_\lambda$ is a direct sum of irreducible submodules isomorphic to $S^\lambda$. The submodule $U_\lambda$ is called the \emph{$\lambda$-isotypic component} of $U$. 

Now let $I\subset \K[X_1,\ldots,X_n]$ be a symmetric ideal. It is also a $\K[\Sym_n]$-module, and we have, for every partition $\lambda$ of $n$,
\[
I_\lambda = \K[X_1, \ldots, X_n]_\lambda \cap I.
\]

Let $\lambda$ be a partition of $n$, then the linear subspace $W_\lambda$ of $\K[X_1, \ldots, X_n]$ generated by all the Specht polynomials of shape $\lambda$ is an irreducible submodule of $\K[X_1, \ldots, X_n]_\lambda$ isomorphic to $S^\lambda$. For any other irreducible submodule $\widetilde{W}_\lambda$ in $\K[X_1, \ldots, X_n]_\lambda$, we have an isomorphism $\varphi$ between $W_\lambda$ and $\widetilde{W}_\lambda$. Let $T$ be a Young tableau of shape $\lambda$. Since $\varphi$ respects the action of $\Sym_n$, for any $\tau$ transposition of two elements in a same column of $T$,
\[
\tau \varphi(\spe_T) = - \varphi(\spe_T),
\]
so that $\varphi(\spe_T)$ has to be divisible by $\spe_T$. It follows that $\K[X_1, \ldots, X_n]_\lambda$ is included in the Specht ideal $I^{\spe}_\lambda$, and therefore Theorem~\ref{thm:SpechtTous} gives:

\begin{theorem}\label{thm:IdealVersion}
Let $\K$ be a field of characteristic $0$ and $I \subset \K[X_1,\ldots,X_n]$ be a symmetric ideal. Assume that there exists $P \in I$ of degree $d$, such that $d + \wt(P_d)\leqslant n$. Then for every $m \in \Mon(P_d)$, for every $\lambda \trianglelefteq \mu(m)^\perp$, the ideal $I$ contains the isotypic component $\K[X_1, \ldots, X_n]_\lambda$. In other words
\[
I_\lambda = \K[X_1, \ldots, X_n]_\lambda,
\]
or equivalently
\[
\left(\K[X_1, \ldots, X_n]/I\right)_\lambda = \{ 0 \}.
\]
\end{theorem}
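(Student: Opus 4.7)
The plan is to reduce the statement to a purely representation-theoretic inclusion of isotypic components into Specht ideals, and then apply Theorem~\ref{thm:SpechtTous}. Since $I$ is a symmetric, hence $\K[\Sym_n]$-stable, submodule of $\K[X_1,\ldots,X_n]$, one has the compatibility $I_\lambda = \K[X_1,\ldots,X_n]_\lambda \cap I$ with the ambient isotypic decomposition. Consequently, the conclusion $I_\lambda = \K[X_1,\ldots,X_n]_\lambda$ amounts to showing $\K[X_1,\ldots,X_n]_\lambda \subset I$. By Theorem~\ref{thm:SpechtTous} we already have $I^{\spe}_\lambda \subset I$ for every $\lambda \trianglelefteq \mu(m)^\perp$, so the whole problem reduces to proving the \emph{ideal-free} statement
\[
\K[X_1,\ldots,X_n]_\lambda \subset I^{\spe}_\lambda \quad \text{for every } \lambda \vdash n.
\]

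To establish this inclusion I would argue as sketched in the paragraph preceding the statement. The linear span $W_\lambda$ of all Specht polynomials $\spe_T$ with $\sh(T) = \lambda$ is an irreducible $\K[\Sym_n]$-submodule of $\K[X_1,\ldots,X_n]$ isomorphic to $S^\lambda$. Pick any other irreducible submodule $\widetilde{W}_\lambda$ of $\K[X_1,\ldots,X_n]_\lambda$ and a $\K[\Sym_n]$-equivariant isomorphism $\varphi \colon W_\lambda \to \widetilde{W}_\lambda$. Fix a $\lambda$-tableau $T$. For every transposition $\tau = (i,j)$ of two entries in the same column of $T$ one has $\tau\spe_T = -\spe_T$, and by equivariance $\tau\varphi(\spe_T) = -\varphi(\spe_T)$; hence $\varphi(\spe_T)$ vanishes on every hyperplane $X_i = X_j$ with $i,j$ in a common column. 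Since these linear forms, taken over all column pairs, are pairwise coprime, their product $\spe_T$ divides $\varphi(\spe_T)$, which places $\varphi(\spe_T)$ inside $I^{\spe}_\lambda$. As $T$ ranges over $\lambda$-tableaux the $\spe_T$ span $W_\lambda$, so $\widetilde{W}_\lambda = \varphi(W_\lambda) \subset I^{\spe}_\lambda$. Summing over all irreducible submodules of $\K[X_1,\ldots,X_n]_\lambda$ (using characteristic zero to ensure the full isotypic decomposition) yields the desired inclusion.

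Chaining the two steps gives $\K[X_1,\ldots,X_n]_\lambda \subset I^{\spe}_\lambda \subset I$ whenever $\lambda \trianglelefteq \mu(m)^\perp$, which proves the theorem. The main obstacle is the divisibility argument: from the column-antisymmetry of $\varphi(\spe_T)$ one must conclude that the \emph{entire} product of column Vandermondes divides $\varphi(\spe_T)$, not just each individual linear factor. This is fortunately immediate once one notes that the linear forms $X_i - X_j$ indexed by column pairs of $T$ are pairwise coprime elements of the UFD $\K[X_1,\ldots,X_n]$. The role of the characteristic zero hypothesis is twofold: it secures the complete isotypic decomposition of $\K[X_1,\ldots,X_n]_\lambda$ into copies of $S^\lambda$, and it validates the identification $W_\lambda \cong S^\lambda$ as irreducibles (a fact that can fail in positive characteristic).
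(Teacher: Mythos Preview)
Your proposal is correct and follows essentially the same route as the paper: first establish the ideal-free inclusion $\K[X_1,\ldots,X_n]_\lambda \subset I^{\spe}_\lambda$ via the column-antisymmetry/divisibility argument (you supply the coprimality justification that the paper leaves implicit), and then invoke Theorem~\ref{thm:SpechtTous} to conclude $I^{\spe}_\lambda \subset I$. There is no meaningful difference in strategy between your argument and the paper's.
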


Given polynomials  $P_1,\ldots,P_l\in \K[X_1,\ldots,X_{n_0}]$, they naturally induce a symmetric ideal in $\K[X_1,\ldots,X_n]$ for any $n\geq n_0$. We get an increasing sequence of symmetric ideals $(I)_{n\geq n_0}$, and one can study stabilization properties in terms of representations (see for instance \cite{sam2015stability,church2015fi}).

We remark that when $n$ is large enough, the condition on the support of the leading component of $P$ is automatically fulfilled and we can immediately deduce:

\begin{theorem}\label{Asymptotic}
Let $\K$ be of characteristic $0$ and $n_0$ be an integer. Given $Q_1, \ldots, Q_l $ in $\K[X_1,\ldots,X_{n_0}]$, consider for any $n\geq n_0$ the ideal
\[
I_n = \langle \sigma(Q_i), \sigma\in \Sym_{n}, 1\leq i \leq l \rangle.
\]
Then if $n$ is large enough, for every $1\leq i \leq l$, every monomial $m$ of $Q_i$ of maximal degree, and any $\lambda$ partition of $n$ such that $\lambda \trianglelefteq \mu(m)^\perp$,
\[
\left(\K[X_1, \ldots, X_n]/I_n\right)_\lambda = \{ 0 \}.
\]
\end{theorem}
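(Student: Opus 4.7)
The plan is to deduce this directly from Theorem~\ref{thm:IdealVersion} by choosing $n$ large enough to activate its support hypothesis uniformly in $i$. Each $Q_i$ is a fixed polynomial in the $n_0$ variables $X_1,\ldots,X_{n_0}$, so both its degree $d_i := \deg(Q_i)$ and the weight $w_i := \wt\bigl((Q_i)_{d_i}\bigr)$ of its top-degree component are invariants of $Q_i$ that do not depend on $n$; in particular $w_i \leq n_0$. Setting $N := \max_{1 \leq i \leq l}(d_i + w_i)$, the support condition $d_i + \wt\bigl((Q_i)_{d_i}\bigr) \leq n$ required by Theorem~\ref{thm:IdealVersion} is then satisfied by $Q_i$ for every $i$ as soon as $n \geq N$.

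For any such $n$ and any $i$, $Q_i$ lies in the symmetric ideal $I_n$ by construction and meets the hypothesis of Theorem~\ref{thm:IdealVersion}. Applying that theorem to $Q_i \in I_n$ immediately yields, for every $m \in \Mon\bigl((Q_i)_{d_i}\bigr)$ and every $\lambda \vdash n$ with $\lambda \trianglelefteq \mu(m)^\perp$, the inclusion $\K[X_1,\ldots,X_n]_\lambda \subseteq I_n$, which is equivalent to $\bigl(\K[X_1,\ldots,X_n]/I_n\bigr)_\lambda = \{0\}$, as asserted.

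There is no real technical obstacle here. The only subtlety worth flagging is that the partition $\mu(m)$, and hence $\mu(m)^\perp$, itself depends on $n$: by Definition~\ref{def:Mu(m)} one pads the partial-degree sequence of $m$ with $n-d-l$ trailing $1$'s before dualising, so the statement must be read for each fixed $n \geq N$ separately. The \emph{asymptotic} content of the theorem is therefore precisely the uniform verification of the support hypothesis carried out in the first paragraph.
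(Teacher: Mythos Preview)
Your proof is correct and follows exactly the paper's approach: the paper simply remarks that for $n$ large enough the support condition is automatically satisfied, and then deduces the result immediately from Theorem~\ref{thm:IdealVersion}. Your argument spells this out explicitly (with the uniform bound $N=\max_i(d_i+w_i)$) and adds the helpful observation that $\mu(m)$ depends on $n$, but the logic is identical.
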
{}

\subsection{Symmetric sums of squares on symmetric varieties}\label{ssec:SOS}

As a final application we consider sums of squares of real polynomials. Let $P\in\K[X_1,\ldots,X_n]$. Then $P$ is called a sum of squares, if there exist polynomials $P_1,\ldots,P_k$ with
$$P=P_1^2+\ldots+P_k^2.$$ Sum of squares are the  cornerstone in 
the so called moment approach to polynomial optimization \cite{lasserre}: in general,  it can be  decided by semidefinite programming if a given polynomial affords a decomposition as a sum of squares. 
The case of symmetric sums of squares has received some interest by different authors \cite{blrie,goel2016choi,raymond2018,riener2013,kurpisz2016sum}. 

In \cite{blrie}, Blekherman and the second author described how to characterize symmetric sums of squares through  representation theory. 
More precisely, they use the theory of higher Specht polynomials \cite{terasoma1993higher} to construct, for every $\lambda\vdash n$, a square matrix polynomial $Q^\lambda$ of size $s_\lambda=\dim(S^\lambda)$, whose entries are symmetric polynomials. Furthermore, these entries are products and sums of elements in $\R[X_1,\ldots, X_n]_\lambda$. So even though they are symmetric, they belong to the ideal generated by the Specht polynomials of shape $\lambda$.  These matrices can be used to show that every symmetric polynomial $P$ that is a sum of squares can be written in the form
\begin{equation*}
P=\sum_{\lambda\vdash n} \Tr( P^\lambda\cdot Q^\lambda),
\end{equation*}
where  each $P^\lambda\in\R[X_1,\ldots, X_n]^{s_\lambda \times s_\lambda}$ is a sum of symmetric squares matrix  polynomial, i.e. $$P^\lambda=L^{t}L$$ for some matrix  $L$ whose entries are symmetric polynomials.
 
 Since the $\lambda$-Specht ideal contains all the coefficients of  $Q^\lambda$ we can apply  Theorem \ref{thm:IdealVersion} to obtain the following result on representations of a symmetric polynomial modulo a symmetric ideal. 
 
 \begin{theorem}\label{sos1}
 Let $P\in\R[X_1,\ldots,X_n]^{S_n}$ be a symmetric sum of squares polynomial and $I$ be a symmetric ideal in $\R[X_1,\ldots,X_n]$. Further, we assume that there exists $F \in I$ of degree $d$, such that $d + \wt(F_d)\leqslant n$. 
 Then $P$ can be written as
 
 \[
 P=\sum_{\nu\not\trianglelefteq \mu(m)^{\perp}} \Tr( P^\lambda\cdot Q^\lambda) \mod I,
 \]
 where again each $P^\lambda=L^{t}L$ for some matrix polynomial $L$ whose entries are symmetric polynomials.
 \end{theorem}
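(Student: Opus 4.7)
The plan is to start from the Blekherman--Riener decomposition recalled just before the theorem, namely
\[
P = \sum_{\lambda\vdash n}\Tr(P^\lambda\cdot Q^\lambda),
\]
and then show that, modulo the ideal $I$, all contributions indexed by partitions $\lambda\trianglelefteq\mu(m)^\perp$ drop out. So the first step would be to invoke that decomposition as a black box, so that it suffices to analyze each term $\Tr(P^\lambda\cdot Q^\lambda)$ individually.

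Next, I would focus on the matrices $Q^\lambda$. The key structural fact stated in the paragraph preceding the theorem is that the entries of $Q^\lambda$ are built as sums of products of higher Specht polynomials of shape $\lambda$, and hence lie in the $\lambda$-Specht ideal $I^{\spe}_\lambda$. I would simply quote this fact, so every entry of $Q^\lambda$ belongs to $I^{\spe}_\lambda$, and consequently every entry of $P^\lambda\cdot Q^\lambda$ belongs to $I^{\spe}_\lambda$ as well, which means $\Tr(P^\lambda\cdot Q^\lambda)\in I^{\spe}_\lambda$.

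The main step is then to apply Theorem~\ref{thm:SpechtTous} (or equivalently Theorem~\ref{thm:IdealVersion}): under the hypothesis that $F\in I$ has degree $d$ with $d+\wt(F_d)\leq n$, for every monomial $m\in\Mon(F_d)$ and every partition $\lambda\trianglelefteq\mu(m)^\perp$, one has the inclusion $I^{\spe}_\lambda\subset I$. Combined with the previous step, this immediately yields
\[
\Tr(P^\lambda\cdot Q^\lambda)\equiv 0\pmod{I}\qquad \text{for every }\lambda\trianglelefteq\mu(m)^\perp.
\]
Substituting into the Blekherman--Riener expansion leaves only the indices $\nu\not\trianglelefteq\mu(m)^\perp$, which is exactly the claimed congruence; the matrices $P^\lambda=L^tL$ in the surviving terms are unchanged from the original decomposition.

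I expect essentially no obstacle beyond carefully citing the two inputs. The only subtle point is that the Blekherman--Riener decomposition is stated for symmetric sums of squares in general, not modulo an ideal, so one must be explicit that reducing modulo $I$ preserves the sum of squares structure of each $P^\lambda=L^tL$: since $I$ plays no role in the identity $P^\lambda=L^tL$ itself, the form of the surviving summands is automatic and no additional work is needed.
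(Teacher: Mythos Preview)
Your proposal is correct and follows essentially the same approach as the paper. The paper does not spell out a formal proof but simply notes, in the sentence preceding the theorem, that the entries of $Q^\lambda$ lie in $I^{\spe}_\lambda$ and then invokes Theorem~\ref{thm:IdealVersion} (equivalently Theorem~\ref{thm:SpechtTous}) to conclude that these contributions vanish modulo $I$ whenever $\lambda\trianglelefteq\mu(m)^\perp$; your write-up makes the same two steps explicit.
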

 A case of special interest is the case when $I$ is the gradient ideal $I_{\text{grad}}(P)$ of a given polynomial $P$ of even degree $2d$. Sturmfels and Nie \cite{nie} showed that a polynomial that is positive on its gradient variety $V(I_{\text{grad}})$ can always be written as a sum of squares modulo its gradient ideal. When $P$ is a symmetric polynomial our results can be applied to reduce the problem size. 
 
 It is worth remarking that perturbations can be used to transfer a polynomial into the situation of finitely many critical points in a symmetric way:
 For example, Hanzon and Jibetean \cite{han}, as well as Jibetean and  Laurent \cite{laurent} considered
the following perturbation of a polynomial:
\[
P_\varepsilon:=\varepsilon\cdot (X_1^{2d+2}+\ldots+ X_n^{2d+2})+P.
\]
Since the perturbation term is positive definite and of higher degree,  the perturbed polynomial $P_\epsilon$ has a global minimum and the minimal value converges to the infimum of $P$ with $\varepsilon \to 0$. Moreover, if $P$ in fact has global minimizers, each connected component of the set of global minimizers of $P$ contains a point which is limit of a branch of local  minimizers of $P_\varepsilon$ (see \cite{laurent}). 
 Furthermore, observe that  the quotient $I_{\text{grad}}(P_\varepsilon)$ is generated by the  polynomials  $2(d+1) \varepsilon X_i^{2d+1}+\frac{\partial P}{\partial X_i}$, for $1\leq i\leq n$ and thus the quotient  $\R[X_1,\ldots, X_n]/I_{\text{grad}}(P_\varepsilon)$  is a finite dimensional vector space. 
This structure of the symmetric gradient ideal gives us further restrictions on both the number and the sizes of the matrices in Theorem \ref{sos1}. Indeed, it follows that only irreducible representations corresponding to partitions with at most $2d+1$ rows can appear.  For a fixed $d$, the number of these partitions is polynomial in $n$ and it follows from \cite[Theorem 2.5]{basu2015isotypic} that the sizes of the above matrices are polynomial as well. In order to use Theorem \ref{sos1} practically to decide if a polynomial is a sum of squares of polynomials, one uses semi-definite programming. The complexity of such a program is mainly determined by the size of the matrices used to define it. Therefore, our discussion above can in fact be used to design efficient semi-definite programs, of a size which depends only polynomially on $n$, to check if $P_\varepsilon\geq 0$ for a given  $\varepsilon>0$. 

\section{Concluding comments and open questions}

This paper provides properties of the ideals generated by all the Specht polynomials associated with Young tableaux of a given shape. 
In particular, one of our main results shows how the inclusions of these ideals relates to the  comparision of the associated partitions in the dominance order. 
The algebraic geometric aspects of these ideals are of special interest.
It will be further research to see how the results presented here could be useful, for example to compute Gr\"{o}bner bases of Specht ideals, or to decide the radicalness of Specht ideals, conjectured in \cite{yanagawa2019specht}.

A second main result, Theorem~\ref{thm:SpechtTous}, emphasizes the connection between ideals inviariant under the action of the symmetric group and Specht ideals. 
This leads to several algorithmic applications, such as computing points in the corresponding varieties, or certifying the non-negativity of symmetric polynomials.
An analogue study would be of interest in a more general setup, in particular for other groups affording combinatorial descriptions similar to Specht polynomials. 

\subsection*{Acknowledgements}

This work has been supported by European Union's Horizon 2020 research and innovation programme under the Marie Sk\l{}odowska-Curie grant agreement 813211 (POEMA) and the Troms\o~ Research foundation grant agreement 17matteCR.

We would like to thank the anonymous referees for helpful suggestions. In particular, one of their questions led us to a stronger statement of Theorem~\ref{thm:inclusionV_lamda}. 

\bibliographystyle{abbrv}
\bibliography{.bib}

\appendix

\section{Proof of Proposition~\ref{thm:Specht} in general characteristic}\label{App:A}
Recall that we want to show that there exist polynomials $R_\sigma \in \K[X_1,\ldots,X_n]$, for $\sigma \in \Sym_n$ such that:
\[
\Delta(J_1)\cdots\Delta(J_l) = \sum_{\sigma \in \Sym_n} R_\sigma \sigma P.
\]

In order to avoid an overload of notation we rename the variables in the following way:
\[ Z_{i,s}\,\text{ for } 1\leq i\leq \len(\mu) \text{ and } 1\leq s\leq \mu_i,\]
where we identify $Z_{i,1}$ with $X_i$ for $1\leq i\leq \len(\mu)$. In this way whenever $s>1$ then $Z_{i,s}$ does not appear in $P_d$. In this setting, the monomial $m$ is written as
\[
m=Z_{1,1}^{k_1} Z_{2,1}^{k_2} \cdots Z_{l,1}^{k_l}.
\]
We will show the existence of polynomials $R_\sigma$, only involving the variables $Z_{i,s}$ for  $1\leq i \leq l$, such that
\[
\Delta(J_1)\cdots\Delta(J_l)=\prod_{i=1}^l\prod_{1 \leqslant s<t\leqslant \mu_i} (Z_{i,s}-Z_{i,t}) = \sum_{\sigma \in S_\mu} R_\sigma \sigma P,\] where 
\[S_\mu=S_{\mu_1}\times S_{\mu_2}\times\cdots\times S_{\mu_{l}}\times \underbrace{S_1 \times \cdots \times S_1}_{n-d-l}\] 
and each factor acts on the corresponding subset of variables, namely:
\[\sigma Z_{i,s} = Z_{i,\sigma_i(s)}.\]

Let us show this by induction on the degree $d$ of $P$. If $d=0$, there is nothing to prove.

Now assume $d>1$. Up to a rescaling, we may assume that \[P = m + S,\] where  $S$ is a polynomial of degree at most $d$ such that $m \not \in \Mon(S)$ and $S_d$ does not contain any variable $Z_{i,s}$ with $s>1$. 

Let $\tau$ be the transposition exchanging $Z_{l,1}$ and $Z_{l,\mu_l}$. Then \[P - \tau P = (Z_{l,1} - Z_{l,\mu_l}) Q\] where $Q$ is a polynomial of degree $d-1$. Because $Z_{l,\mu_l}$ does not appear in $S_d$, we can write \[Q= m'  + S'\] with $m'= \left(\prod_{i=1}^{l-1}Z_{i,1}^{k_i}\right) Z_{l,1}^{k_l-1}$ and $S'$ is a polynomial of degree at most $d-1$. Since the only new variable appearing in $Q_d$ is $Z_{l,\mu_l}$, we have \[
d-1+\wt(Q_d) \leq d + \wt(P_d) \leq n,
\]
in such a way that we can apply the induction hypothesis on $Q$. This provides polynomials $R_\rho'$, only involving the variables $Z_{i,s}$ for  $1\leq i \leq l$, except $Z_{l,\mu_l}$, such that
 \[ \left(\prod_{i=1}^{l-1}\prod_{1 \leqslant s < t \leqslant \mu_i}(Z_{i,s}-Z_{i,t})\right) \prod_{1 \leqslant s < t \leqslant \mu_l-1}(Z_{l,s}-Z_{l,t}) = \sum_{\rho \in S'} R'_\rho \rho Q,\]
where \[S' = S_{\mu_1}\times S_{\mu_2}\times\cdots\times \left(S_{\mu_{l}-1}\times S_1\right)\times \underbrace{S_1 \times \cdots \times S_1}_{n-d-l}\] can be seen as a subgroup of $S_\mu$.

Since any  $\rho \in S'$ leaves the product $\prod_{s=1}^{\mu_l-1}(Z_{l,s}-Z_{l,\mu_l})$ unchanged, we have \begin{eqnarray*} \prod_{i=1}^{l}\prod_{1 \leqslant s < t \leqslant \mu_i}(Z_{i,s}-Z_{i,t}) &=& \prod_{s=1}^{\mu_l-1}(Z_{l,s}-Z_{l,\mu_l}) \sum_{\rho \in S'}R'_\rho \rho Q \\&=& \sum_{\rho \in S'} R'_\rho \rho\left(\left(\prod_{s=1}^{\mu_l-1}(Z_{l,s}-Z_{l,\mu_l}) \right)Q\right)\\&=& \sum_{\rho \in S'} R'_\rho \prod_{s=2}^{\mu_l-1}(Z_{l,\rho_l(s)}-Z_{l,\mu_l}) \rho\left( P - \tau P\right).
\end{eqnarray*} 
Because $\rho \tau \in S_\mu$, we can rewrite this expression as $\sum_{\sigma \in S_\mu} R_\sigma \sigma P$ with the desired properties.

\end{document}